\newtheorem{theorem}[equation]{Theorem}
\newtheorem{lemma}[equation]{Lemma}
\newtheorem{proposition}[equation]{Proposition}
\theoremstyle{definition}
\newtheorem{definition}[equation]{Definition}
\theoremstyle{remark}
\newtheorem{example}[equation]{Example}
\newtheorem{remark}[equation]{Remark}
\numberwithin{equation}{section}
\newcommand{\I}{{I}}
\newcommand{\bI}{\mathbb{I}}
\newcommand{\af}{\alpha}
\newcommand{\mdl}[3]{_{#2}#1_{#3}}
\newcommand{\tG}{G}
\newcommand{\mcD}{\mathcal{D}}
\newcommand{\m}{{}^{-1}}
\newcommand{\id}{{\rm id}}
\newcommand{\mcI}{{\mathcal{I}}}
\newcommand{\op}[1]{#1^{\rm op}}
\newcommand{\End}[1]{\operatorname{End}(#1)}
\newcommand{\opcop}[1]{#1^{\rm opcop}}
\newcommand{\copr}[2]{{#1}_{#2}}
\title[Partial Hopf actions]{Partial Hopf actions  on generalized matrix algebras}
\author[D. Bagio]{Dirceu Bagio}
\author[E. Batista]{Eliezer Batista}
\author[H. Pinedo]{H. Pinedo}
\address[Dirceu Bagio and Eliezer Batista]{Departamento de Matem\'atica, Universidade Federal de Santa Catarina, 88040-970 Florian\'opolis SC, Brazil. }
\email{d.bagio@ufsc.br \\ e.batista@ufsc.br }
\address[Hector Pinedo]{Escuela de Matematicas, Universidad Industrial de Santander, Cra. 27 Calle 9, UIS Edificio 45, Bucaramanga, Colombia.}
\email{hpinedot@uis.edu.co}
\begin{document}
	
	\keywords{Hopf algebras, partial actions, generalized matrix algebra}
	\subjclass[2020]{Primary 16T05, 16S50.}
	\thanks{D. Bagio was partially supported by CAPES-PRINT 88887.894056/2023-00. H. Pinedo was partially supported by CAPES-PRINT 88887.895167/2023-00 and by FAPESP, process n°: 2023/14066-5.}

	\begin{abstract}  
		Let $\Bbbk$ be a field, $H$ a Hopf algebra over $\Bbbk$, and $R = (_iM_j)_{1 \leq i,j \leq n}$ a ge\-ne\-rali\-zed matrix algebra. In this work, we establish necessary and sufficient conditions for $H$ to act partially on $R$. To achieve this, we introduce the concept of an opposite covariant pair and demonstrate that it satisfies a universal property. In the special case where $H = \Bbbk G$ is the group algebra of a group $G$, we recover the conditions given in \cite{BP} for the existence of a unital partial action of $G$ on $R$.
	\end{abstract}
	
	\maketitle

	\section{Introduction}
	
	\vspace{.1cm}
	
	Generalized matrix algebras were first introduced in the literature by Brown in \cite{Brown}, where the author explored matrix rings associated with orthogonal groups. These matrix rings form a large class of rings that arise in various areas of algebra and module theory and have been extensively studied. Broadly speaking, a generalized matrix algebra is an algebra formed as a direct sum of $\Bbbk$-vector spaces, 
	\[
	R = \bigoplus\limits_{i,j=1}^n {}_i M_j,
	\]
	where the diagonal components $R_i = {}_i M_i$ are $\Bbbk$-algebras, and the off-diagonal components ${}_i M_j$ are $(R_i, R_j)$-bimodules. The algebra structure of $R$ emulates the usual structure of a matrix algebra by employing a series of bimodule morphisms to define the product between entries residing solely in the bimodules.
	\vspace{.1cm}

	Notably, the class of $2 \times 2$ generalized matrix algebras has been shown to correspond one-to-one with Morita contexts \cite{Morita}. The study of the so-called \emph{Morita ring} has gained significant attention in recent years, as it encapsulates key algebraic properties of the Morita context itself \cite{HV, KT, Tang}. Another context in which generalized matrix algebras arise is in the study of idempotents. Specifically, any algebra $A$ equipped with a complete set of orthogonal idempotents $\{e_i\}_{i=1}^n$ can be decomposed using Peirce decomposition. This decomposition can be organized into an $n \times n$ generalized matrix algebra $A^\pi$, which is isomorphic to $A$. For further details, see, for instance, \cite{ABW}.
	\vspace{.1cm}
	
	Partial actions of Hopf algebras were introduced in \cite{CJ}, motivated by the Galois theory of partial group actions studied in \cite{DFP}. Since then, numerous works on partial actions of Hopf algebras have been developed, exploring several problems that were initially addressed from a global perspective through a partial framework. For a detailed account of this topic, the interested reader is referred to \cite{B}.  
	\vspace{.1cm}

	The main objective of this work is to establish a connection between two significant topics: partial actions of Hopf algebras and generalized matrix algebras. Specifically, we aim to determine the necessary and sufficient conditions for a Hopf algebra to act partially on a generalized matrix algebra. The analogous problem was considered in \cite{BP} for the case of partial group actions.
	\vspace{.1cm}

	This paper is structured as follows. In Section 2, we review the fundamental concepts of generalized matrix algebras, partial actions, partial representations of Hopf algebras and partial smash products. We also introduce the notion of an opposite covariant pair and demonstrate that it satisfies a universal property. In Section 3, we prove the main result of this work, Theorem 3.4, which provides the necessary and sufficient conditions for a Hopf algebra $H$ to act partially on a generalized matrix algebra $R$. Conditions for the existence of a partial action of a group $\tG$ on $R$ were previously discussed in \cite{BP}. The relationship between these results and those of Theorem 3.4 is analyzed in Section 4, focusing on the specific case where the Hopf algebra is the group algebra $\Bbbk \tG$. Finally, in Section 5, we discuss Morita-equivalent partial actions of a Hopf algebra. Additionally, we address an inconsistency in Proposition 73 of \cite{AF}.

	\subsection*{Conventions}\label{subsec:conv}
	Throughout this work, $\Bbbk$ is a field, by an algebra  we mean an associative  unital $\Bbbk$-algebra, and vector spaces and tensor product are over $\Bbbk$. Given an algebra $A$, the opposite algebra $\op{A}$ is the vector space $A$ with the multiplication opposite, i.e., $a\cdot_{{\rm op}} b=ba$, for all $a,b\in A$. Given a Hopf algebra $H$ and $h\in H$, we use the simplified Sweedler's notation $\Delta(h)=h_{(1)}\otimes h_{(2)}$ to denote the comultiplication of $h$. We will assume that every Hopf algebra $H$ has bijective antipode. For a coalgebra $C$, the coopposite coalgebra $C^{\rm cop}$ is the vector space $C$ with comultiplication $\Delta_{\rm cop}(c)=c_{(2)}\otimes c_{(1)}$, where $\Delta(c)=c_{(1)}\otimes c_{(2)}$. In order to avoid confusion, we will denote $\Delta_{\rm cop}(c)=c_{[1]}\otimes c_{[2]}$, for all $c\in C^{\rm cop}$. Finally, the set $\{1,\ldots ,n\}$ will be denoted by $\bI_n.$

	\section{Preliminaries}
	In this section, we review definitions and results that will be used in the rest of the work. We introduce the concept of opposite covariant pair and prove that it satisfies a certain universal property.
	
	\subsection{Generalized matrix algebras}
	We begin with the following definition, which will allow us to introduce one of the key concepts of this work, namely, the notion of a generalized matrix ring.
	\begin{definition}
		Let $n$ be a positive integer. A {\it generalized matrix datum} of order $n$ over $\Bbbk$ is a triple
		\[
		\mathcal{R}= \left( \{ {}_i M_j \}_{i,j\in\bI_n} , \{ \theta_{ijk} \}_{ i,j,k \in \bI_n} ,\{ \eta_i \}_{i \in \bI_n} \right) ,
		\]
		such that: for every $i,j,k\in \I_n$,
		\begin{enumerate}
			\item[\footnotesize{(GMD1)}] ${}_i M_j$ is a $\Bbbk$-vector space, \vspace{.05cm}
			\item[\footnotesize{(GMD2)}] $\theta_{ijk} :{}_i M_j \otimes {}_j M_k \rightarrow {}_i M_k$ and $\eta_i :\Bbbk \rightarrow {}_i M_i$  are $\Bbbk$-linear map, \vspace{.05cm}
			\item[\footnotesize{(GMD3)}] $\theta_{ijk}$ and $\eta_i$ satisfy the following compatibility identities:
			\begin{equation*}
				\theta_{ikl} \circ \left( \theta_{ijk} \otimes \id_{{}_kM_l} \right) =\theta_{ijl} \circ \left( \id_{{}_iM_j} \otimes\theta_{jkl}  \right) 
			\end{equation*}
			and
			\begin{equation*}
				\theta_{iij} \circ (\eta_i \otimes \id_{{}_i M_j}) =\id_{{}_i M_j} =\theta_{ijj} \circ (\id_{{}_i M_j} \otimes \eta_j ) .
			\end{equation*}
		\end{enumerate}    
	\end{definition}
	
	From the previous identities, one can easily deduce the following result.
	
	\begin{proposition}\label{prop-def-gma}
		Let $R$ be a generalized matrix datum of order $n$ over $\Bbbk$. Then, for every $i,j,k\in \I_n$, we have that:
		\begin{enumerate} [\rm (i)]
			\item the vector spaces ${}_i M_i$ are unital $\Bbbk$-algebras, henceforth de\-no\-ted as $R_i$,\vspace{.1cm}
			\item the vector spaces ${}_i M_j$ are $(R_i , R_j)$ bimodules,\vspace{.1cm}
			\item the linear maps $\theta_{ijk}$ are morphisms of $(R_i ,R_k)$-bimodules and are balanced over $R_j$, that is, $\theta_{ijk} :{}_i M_j \otimes_{R_j} {}_j M_k \rightarrow {}_i M_k$.
		\end{enumerate}\qed
	\end{proposition}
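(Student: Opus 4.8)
The plan is to deduce all three items by plugging well-chosen quadruples of indices into the two compatibility identities of (GMD3); the only thing requiring attention is index-bookkeeping, and I expect no genuine obstacle beyond that (which is presumably why the statement says the result is ``easily'' deduced).

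First I would treat (i). On $R_i:={}_iM_i$ set $a\cdot b:=\theta_{iii}(a\otimes b)$ and $1_{R_i}:=\eta_i(1)$. Bilinearity of the product is the $\Bbbk$-linearity of $\theta_{iii}$, associativity is the first identity of (GMD3) in the case $(i,j,k,l)=(i,i,i,i)$, and the two unit axioms are the second identity of (GMD3) read with $j=i$. Hence each $R_i$ is a unital $\Bbbk$-algebra.

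Next, for (ii), I would give ${}_iM_j$ the left $R_i$-action $a\cdot m:=\theta_{iij}(a\otimes m)$ and the right $R_j$-action $m\cdot b:=\theta_{ijj}(m\otimes b)$. Then: $(aa')\cdot m=a\cdot(a'\cdot m)$ is the first identity of (GMD3) with $(i,j,k,l)=(i,i,i,j)$; $(m\cdot b)\cdot b'=m\cdot(bb')$ is the case $(i,j,j,j)$; the middle compatibility $(a\cdot m)\cdot b=a\cdot(m\cdot b)$ is the case $(i,i,j,j)$; and $1_{R_i}\cdot m=m=m\cdot 1_{R_j}$ is the second identity of (GMD3). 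So ${}_iM_j$ is an $(R_i,R_j)$-bimodule.

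Finally, for (iii), using the structures just built: left $R_i$-linearity of $\theta_{ijk}$, i.e. $\theta_{ijk}\bigl((a\cdot m)\otimes m'\bigr)=a\cdot\theta_{ijk}(m\otimes m')$, is the first identity of (GMD3) with $(i,j,k,l)=(i,i,j,k)$; right $R_k$-linearity is the case $(i,j,k,k)$; and $\theta_{ijk}\bigl((m\cdot b)\otimes m'\bigr)=\theta_{ijk}\bigl(m\otimes(b\cdot m')\bigr)$ for $b\in R_j$ is the case $(i,j,j,k)$. The last equation says exactly that $\theta_{ijk}$ is $R_j$-balanced, so by the universal property of $\otimes_{R_j}$ it factors as a map ${}_iM_j\otimes_{R_j}{}_jM_k\to{}_iM_k$, and the first two equations show this factored map is a morphism of $(R_i,R_k)$-bimodules. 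The main (and only) point of care throughout is that the single symbol $\theta$ plays the role of an algebra product, of a one-sided module action, or of a structure map, according to how its three indices collapse, so one must keep careful track of which specialization of (GMD3) is being invoked at each step.
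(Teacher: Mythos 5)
Your proof is correct and is exactly the routine index-specialization of (GMD3) that the paper has in mind when it omits the proof as "easily deduced" (the proposition is stated with \qed and no argument). All of your quadruple choices check out against the stated compatibility identities, so nothing further is needed.
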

	From Proposition \ref{prop-def-gma}, we conclude that: for all $i,j\in \I_n$,
	\begin{itemize}
		\item [$\diamond$] $\,\,\theta_{iii}:\!_i{M}_{i}\otimes_{R_i}\,\!\!\mdl{M}{i}{i}\to \!\mdl{M}{i}{i}$ is simply the multiplication in $R_i$,\vspace{.1cm}
		\item [$\diamond$] $\,\,\theta_{iij}:\!_i{M}_{i}\otimes_{R_i}\,\!\!\mdl{M}{i}{j}\to \!\mdl{M}{i}{j}$, is the left $R_i$-module structure on ${}_iM_j$,\vspace{.1cm}
		\item [$\diamond$] $\,\,\theta_{ijj}:\!_i{M}_{j}\otimes_{R_j}\,\!\!\mdl{M}{j}{j}\to \!\mdl{M}{i}{j}$, is the right $R_j$-module structure on ${}_iM_j$.
	\end{itemize}
	Given a generalized matrix datum, one can associate to the vector space  \[
	R=\bigoplus_{i,j=1}^n {}_iM_j\]
	a algebra structure  as follows. For all $i,j,k,l \in \bI_n$, define the linear map $\mu_{ij,kl}: {}_i M_j \otimes {}_k M_l \rightarrow {}_i M_l$
	by
	\[
	\mu_{ij,kl}=\left\{ \begin{array}{ll} \theta_{ijl}, & \text{ if } j=k ,\\
		0 ,& \text{ otherwise}.\end{array}\right. 
	\]
	Hence, we have linear maps $\iota_{il}\circ\mu_{ij,kl}: {}_i M_j \otimes {}_k M_l \to R$, where $\imath_{il} : {}_iM_l \rightarrow R$ is the inclusion map. Using the universal property of the direct sum
	\[
	R\otimes R \cong \bigoplus_{i,j,k,l =1}^n {}_iM_j \otimes {}_kM_l,
	\]
	we obtain a linear map $\mu : R\otimes R \rightarrow R$. We also define the linear map $\eta:\Bbbk \rightarrow R$ by
	\[
	\eta =\sum_{i=1}^n \imath_{ii} \circ \eta_i.\]
	\begin{proposition}\label{rdata}
		Given a generalized matrix datum
		\[
		\mathcal{R}= \left( \{ {}_i M_j \}_{i,j\in\bI_n} , \{ \theta_{ijk} \}_{ i,j,k \in \bI_n} ,\{ \eta_i \}_{i \in \bI_n} \right) ,
		\]
		the triple $(R, \mu , \eta )$ defines a unital algebra over the field $\Bbbk$. 
	\end{proposition}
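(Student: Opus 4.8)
The plan is to verify the two ring axioms for $(R,\mu,\eta)$---associativity of $\mu$ and the unit property of $\eta$---by reducing each statement to the homogeneous summands of $R=\bigoplus_{i,j=1}^n {}_iM_j$ and then invoking the compatibility identities (GMD3). Since $\mu$ and $\eta$ are assembled from the $\theta_{ijk}$ and $\eta_i$ via the universal property of the direct sum, both axioms are $\Bbbk$-linear identities between maps out of $R^{\otimes 3}$ (resp. $R$), so it is enough to check them after restricting to each summand.

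For associativity, I would compare $\mu\circ(\mu\otimes\id)$ and $\mu\circ(\id\otimes\mu)$ on a generic summand ${}_iM_j\otimes{}_kM_l\otimes{}_pM_q$ of $R^{\otimes 3}$. Unwinding the definition of $\mu$ (which is $\theta_{ijl}$ when the inner indices agree and $0$ otherwise), one sees that $\mu\circ(\mu\otimes\id)$ vanishes on this summand unless $j=k$ \emph{and} $l=p$, and the same index constraint arises for $\mu\circ(\id\otimes\mu)$. On the surviving summand ${}_iM_j\otimes{}_jM_l\otimes{}_lM_q$, the two composites become $\theta_{ilq}\circ(\theta_{ijl}\otimes\id_{{}_lM_q})$ and $\theta_{ijq}\circ(\id_{{}_iM_j}\otimes\theta_{jlq})$, which coincide by the first identity of (GMD3) applied with $(i,j,l,q)$ in the role of $(i,j,k,l)$. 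Hence $\mu$ is associative.

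For the unit, write $1_i:=\eta_i(1)\in{}_iM_i$, so $\eta(1)=\sum_{i=1}^n\imath_{ii}(1_i)$. Given $x\in{}_iM_j$, the product $\mu(\imath_{pp}(1_p)\otimes x)$ is $0$ unless $p=i$, and for $p=i$ it equals $\theta_{iij}(1_i\otimes x)=x$ by the second identity of (GMD3); summing over $p$ and extending linearly gives $\mu(\eta(1)\otimes x)=x$ for all $x\in R$. The right unit axiom $\mu(x\otimes\eta(1))=x$ follows symmetrically from $\theta_{ijj}\circ(\id_{{}_iM_j}\otimes\eta_j)=\id_{{}_iM_j}$. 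I do not expect any genuine obstacle: the argument is pure bookkeeping, and the only point demanding a little care is tracking the inclusions $\imath_{il}$ and the index conventions so that the reduction to homogeneous components is clean, after which the two halves of (GMD3) deliver associativity and unitality respectively.
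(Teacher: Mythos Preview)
Your proposal is correct and is essentially the same argument as the paper's own proof: both reduce the verification of associativity and unitality to the axioms (GMD3). The paper packages the reduction in matrix notation (writing elements as $(x_{ij})$ and $\mu$ as row-column multiplication), while you work directly with the homogeneous summands of the direct sum; your version is slightly more explicit about why the non-matching-index summands vanish, but the underlying content is identical.
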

	
	\begin{proof}
		One can write a generalized matrix algebra of order $n$ over $\Bbbk$ as a square array
		\begin{align}\label{def-gmr}	R = \left( \begin{matrix} R_1 & \mdl{M}{1}{2}& \ldots & \mdl{M}{1}{n} \\ 
				\mdl{M}{2}{1} & R_2 & \ddots & \vdots  \\ 
				\vdots & \ddots & \ddots & \mdl{M}{(n-1)}{n} \\ 
				\mdl{M}{n}{1} &\ldots & \mdl{M}{n}{(n-1)} & R_n \end{matrix} \right) .
		\end{align} 
		A typical element of $R$ can be  written as a  matrix $x=(x_{ij})$. Considering the vector space structure on the direct sum $R$ one can see that the addition is done componentwise and the multiplication $\mu$ on $R$ is the row-column matrix multiplication,
		\[
		\mu (\left( x_{ij} \right) \otimes \left( y_{ij} \right)) = \left( x_{ij} \right) \left( y_{ij} \right)=
		\left( \sum_{k=1}^n x_{ik} y_{kj}\right) =\left( \sum_{k=1}^n \theta_{ikj} (x_{ik} \otimes  y_{kj}) \right) .
		\]
		The associativity relations of the maps $\theta_{ijk}$ given in (GMD3) imply that the matrix multiplication is associative, and the linear map $\eta$ is indeed a unit map. Explicitly, 
		\[
		\eta (\lambda )= \lambda \left( \begin{array}{cccc} 
			1_{1} & 0 & \cdots & 0 \\ 0 & 1_{2} & \cdots & 0 \\
			\vdots & \vdots & \ddots & \vdots \\
			0 & 0 & \cdots & 1_{n} \end{array} \right)  , \quad \lambda\in \Bbbk,
		\]
		in which $1_i$ denotes the unit of the corresponding algebra $R_i$.
	\end{proof}
	
	\begin{definition}
		The algebra constructed in Proposition  \ref{rdata} will be called the \emph {generalized matrix algebra} defined by the generalized matrix datum $\mathcal{R}$.    
	\end{definition}

	In what follows, a generalized matrix algebra $R$ as defined above will be denoted by $R=(\mdl {M}{i}{j})_{i,j\in \bI_n}$. Also, a diagonal matrix in $R$ will be denoted by $r=\operatorname{diag}(r_{11},\ldots,r_{nn})$, that is, $r=(r_{ij})$ is the matrix such that $r_{ij}=0$ if $i\neq j$.  
	
	\begin{remark}\label{bimodule-structure}
		Let $R=(\mdl {M}{i}{j})_{i,j\in \bI_n}$ be a generalized matrix algebra and $k,l\in \bI_n$. Notice that $R$ is a left $R_k$-module with the following structure: for all $r_k\in R_k$ and $(m_{ij})\in R$,
		\[r_k\cdot (m_{ij})=(\tilde{m}_{ij}),\quad\text{ where }\, \tilde{m}_{ij}=\begin{cases}
			m_{ij},& \text{if } i\neq k,\\
			r_k\cdot m_{ij},& \text{if } i=k.\\
		\end{cases}\]
		Similarly, $R$ is a right $R_l$-module via: for all $r_l\in R_l$ and $(m_{ij})\in R$,
		\[(m_{ij})\cdot r_l=(\tilde{m}_{ij}),\quad\text{ where }\, \tilde{m}_{ij}=\begin{cases}
			m_{ij},& \text{if } j\neq l,\\
			m_{ij}\cdot r_l,& \text{if } j=l.\\
		\end{cases}\]
		%Verifying that $R$ is an $(R_k,R_l)$-bimodule with the previous structure is straightforward.
	\end{remark}

	\begin{example}\label{linc}
		Let $\mathcal{C}$ be a $\Bbbk$-linear category with finitely many objects. For each pair of objects $X_i , X_j \in \mathcal{C}^{(0)}$, denote by ${}_i M_j =\text{Hom}_{\mathcal{C}}(X_j , X_i)$, which is a vector space. The total space
		\[
		A(\mathcal{C}) =\bigoplus_{i,\,j} {}_i M_j
		\]
		is a generalized matrix algebra with the multiplication given by the ordinary composition of morphisms in the category $\mathcal{C}$.
	\end{example}
	
	\begin{example}
		Let $(A,B,M,N,\mu,\nu)$ be a Morita context between the algebras $A$ and $B$. Then 
		$$ R = \left( \begin{matrix} A & M \\ N & B \end{matrix} \right)$$
		is a $2$-by-$2$  generalized matrix algebra which is called a {\it Morita ring}.
		Conversely, each $2$-by-$2$  generalized matrix algebra determines a Morita context \cite{HV}. 
	\end{example}
	
	\begin{example}
		Let $A$ be an algebra and $e\in A$ be an idempotent element. One can construct the following generalized matrix algebra:
		\[
		R=\left( \begin{array}{cc} eAe & eA(1-e) \\ (1-e)Ae & (1-e)A(1-e) \end{array} \right) .
		\]
	\end{example}

	\subsection{Partial actions and partial representations of Hopf algebras} Left partial actions of bialgebras on algebras were introduced in \cite{CJ}, since this notion does not deal with symmetric conditions we will give it as it appears in  \cite{AB}.
	\begin{definition}
		{\rm   {\it A symmetric  left partial action }of a Hopf algebra $H$ over a $\Bbbk$-algebra $A$ is a linear map 
			\[
			\begin{array}{rccl} \cdot: & H\otimes A & \rightarrow & A \\
				\, & h\otimes a & \mapsto & h\cdot a \end{array}
			\]
			satisfying: for all $h,k\in H$ and $a,b\in A$,
			\begin{enumerate}
				\item[\footnotesize{(LPA1)}]$\,$ $1_H \cdot a =a$, 
				\item[\footnotesize{(LPA2)}]$\,$ $h\cdot (ab)=(h_{(1)} \cdot a)(h_{(2)}\cdot b)$, 
				\item[\footnotesize{(LPA3)}]$\,$ $h\cdot (k\cdot a)=(h_{(1)}\cdot 1_A )(h_{(2)}k\cdot a)=(h_{(1)}k\cdot a)(h_{(2)}\cdot 1_A) $.
		\end{enumerate}}
	\end{definition}
	\noindent It follows from (LPA2) and (LPA3) that: for all $h,k\in H$ and $a,b\in A$,
	\begin{align}
		\label{prodc}
		&   \left(h_{(1)}\cdot (k\cdot a)\right)(h_{(2)}\cdot b)=(h_{(1)}k\cdot a)(h_{(2)}\cdot 1_A)(h_{(3)}\cdot b)=
		(h_{(1)}k\cdot a)(h_{(2)}\cdot b),\\   
		\label{prodc-1}
		&(h_{(1)}\cdot a)\left(h_{(2)}\cdot (k\cdot b)\right)=(h_{(1)}\cdot a)(h_{(2)}\cdot 1_A)\left(h_{(3)}k\cdot b\right)=(h_{(1)}\cdot a)(h_{(2)}k\cdot b).
	\end{align}

	Similarly, one can give the notion of a right partial action of a Hopf algebra as follows.
	
	\begin{definition}
		{\rm
			{\it    A symmetric right partial action }of a Hopf algebra $H$ over a $\Bbbk$-algebra $A$ is a linear map 
			\[
			\begin{array}{rccl} \cdot: & A\otimes H & \rightarrow & A \\
				\, & a\otimes h & \mapsto & a\cdot h \end{array}
			\]
			satisfying: for all $h,k\in H$ and $a,b\in A$,
			\begin{enumerate}
				\item[\footnotesize{(RPA1)}]$\,$ $a\cdot 1_H =a$,
				\item[\footnotesize{(RPA2)}]$\,$ $(ab)\cdot h=(a\cdot h_{(1)} )(b\cdot h_{(2)})$, 
				\item[\footnotesize{(RPA3)}]$\,$ $(a\cdot k)\cdot h=(a\cdot kh_{(1)})(1_A \cdot h_{(2)})=(1_A\cdot h_{(1)})(a\cdot kh_{(2)})$.
		\end{enumerate}}
	\end{definition}
	\begin{remark}
		From now on in this work by left (right) partial action we mean a symmetric left (right) partial action. Also, we say that an algebra $A$ is a left (right)  partial $H$-module algebra if it is endowed with a left (right) partial action. %we made the same consideration in the case of right partial action. Also by 
	\end{remark}
	\begin{example} \label{ex-pa-group}
		Given a unital partial action     $ \alpha =\left(  A_g, \alpha_g  \right)_{g\in \tG}$ of a group $\tG$ on a unital algebra $A$ (see Definition 1.1 in \cite{DE}) in which each ideal $A_g \trianglelefteq A$ is generated by a central idempotent element $1_g \in A$, one can define a left partial action of the group algebra $\Bbbk G$ on $A$ by %linearly extending the expression
		\[
		g \cdot a= \alpha_g (a1_{g^{-1}}),\,\, a\in A,\,\, g\in G.
		\] 
		Conversely, let $H=\Bbbk \tG$  and $\cdot: H\otimes A\to A$ a left partial action of $H$ on $A$. For each $g\in \tG$, consider $1_g:=g\cdot 1_A$. Notice that $1_g$ is a central idempotent of $A$. Indeed, $1_g1_g=(g\cdot 1_A)(g\cdot 1_A)=g\cdot (1_A1_A)=g\cdot 1_A=1_g$. Also, for all $a\in A$,
		\begin{align*}
			1_ga&=(g\cdot 1_A)(1_{\Bbbk\tG}\cdot a)=(g\cdot 1_A)(gg\m\cdot a)\\
			&=g\cdot (g\m\cdot a)=(gg\m \cdot a)(g\cdot 1_A) \qquad\quad \big(\textrm{by (LPA3)} \big)\\
			&=a1_g.
		\end{align*}
		Then $\alpha=\big(D_g,\af_g\big)_{g\in \tG}$ is a unital partial action of $\tG$ on $A$, where $D_g=A1_g, 
		$ and $\af_g(a)=g\cdot a, a\in D_{g\m}.$
		
	\end{example}
	
	%\begin{example} If $B$ is a left $H$ module algebra via $\triangleright$ and $e=e^2 \in B$ is a central idempotent element, then one can define a left partial action of $H$ on $A=eB$ by  \[  h\cdot (eb)=e(h\triangleright (eb))  \]
	%\end{example}
	\begin{example}\label{ltor}
		Let $\cdot:H\otimes A\to A$ be a left partial action of a Hopf algebra $H$ over an algebra $A$. Consider the linear map
		$\triangleleft: A\otimes \op{H}\to  A $ defined by 
		\begin{equation}\label{lartion}
			a\triangleleft  h=h\cdot a,\quad a\in {A},\,\,h\in \op{H}.
		\end{equation} It is straightforward to verify that $\triangleleft$ is a right  partial action of $\op{H}$ over $ A$. 
	\end{example}

	%Moreover, the same action induces a right partial action of $\opcop{H}$ on $\op A,$ again  observe that (RPA1) is immediate. For(RPA2), consider $a,b\in \op{A}$ and $h\in  \opcop{H}$. Then$$(ab)\lhd h=h\cdot(ba)= (h_{(1)} \cdot b) (h_{(2)}\cdot a)= (a\lhd {h_{[1]}})(b\lhd {h_{[2]}}).$$Moreover, if $a\in \op{A}$ and $k,h\in \opcop{H}$ then  $$(a\lhd  k)\lhd h= h\cdot(k\cdot a)= (h_{(1)}\cdot 1_A )(h_{(2)}k\cdot a)=(a\lhd k{h_{[1]}})(1_A\lhd{h_{[2]}}).$$ Similarly, one can verify that  $(a\lhd  k)\lhd h= h\cdot(k\cdot a)= (1_A\lhd{h_{[1]}})(a\lhd k{h_{[2]}})$.
	\begin{remark}
		
		According to  \cite{ABV}, if $A$
		and $B$ are partial left $H$-module algebras, one can define a morphism of partial $H$-module algebras as an algebra morphism $f:A \to B$ such that $f(h \cdot a) =h \cdot f(a)$, for all $a\in A$ and $h\in H$. The category of all symmetric partial left $H$-module algebras and the morphisms of partial $H$-module algebras between them is denoted as LParAct$H.$ Analogously, one defines the category RParAct$H$ of symmetric  right partial $H$-module algebras.  By the previous example, the categories LParAct$H$  and  RParAct${\op{H}}$ are isomorphic.  
	\end{remark}
	We proceed with the next.
	\begin{definition}
		{\rm  
			Let $H$ be a Hopf algebra and $B$ be a unital algebra. A  {\it partial
				representation of $H$ in $B$} is a linear map $\pi:H\to B$ such that
			\begin{enumerate}
				\item[\footnotesize{(PR1)}]$\,$ $\pi(1_H) =1_B$, \vspace{.1cm}
				\item[\footnotesize{(PR2)}]$\,$ $ \pi(h)\pi(k_{(1)})\pi(S(k_{(2)}))=\pi(hk_{(1)})\pi(S(k_{(2)}))$, \vspace{.1cm}
				\item[\footnotesize{(PR3)}]$\,$ $\pi(h_{(1)})\pi(S(h_{(2)}))\pi(k)=\pi(h_{(1)})\pi(S(k_{(2)})k)$,
			\end{enumerate}
			for any $h,k \in H.$}
		
	\end{definition}  
	
	\begin{remark}
		Let $H$ be a Hopf algebra, $B$ be a unital algebra, and $\pi:H\to B$ be  a partial representation. It follows by  Lemma 2.11 of \cite{ABCQV}  that the following assertions also hold: for all $h,k \in H$,
		\begin{enumerate}
			\item[\footnotesize{(PR4)}]$\,$$ \pi(h)\pi(S(k_{(1)}))\pi(k_{(2)})=\pi(hS(k_{(1)}))\pi(k_{(2)})$,\vspace{.1cm}
			\item[\footnotesize{(PR5)}]$\,$$ \pi(S(h_{(1)}))\pi(h_{(2)})\pi(k)=\pi(S(h_{(1)}))\pi(h_{2}k)$.
		\end{enumerate}
	\end{remark}
	
	\begin{example}\label{rrep1}{\rm
			Let $H$ be a Hopf algebra, $B$ be  a unital algebra and $\pi: H\to B$  be a partial representation. Consider  the linear map  $\overline{\pi}: \opcop H\to \op{B}$ given by
			\[\overline{\pi}(h)=\pi(h),\quad h\in \opcop{H}.\] 
			We claim that $\overline{\pi}$ is a partial representation. Notice that (PR1) is clear. To show  (PR2), consider $h,k\in\opcop H$. Then
			\begin{align*}
				\overline\pi(h)\cdot_{{\rm op}}\overline\pi(k_{[1]})\cdot_{{\rm op}}\overline\pi(S(k_{[2]}))&=\,\,\pi(S(k_{(1)}))\pi(k_{(2)})\pi(h)
				\\&\overset{\mathclap{\scriptscriptstyle{\rm (PR5)}}}{=}\,\,\pi(S(k_{(1)}))\pi(k_{(2)}h)
				\\&= \,\,\overline\pi(h\cdot_{{\rm op}}k_{[1]})\cdot_{{\rm op}}\overline\pi(S(k_{[2]})).
			\end{align*}
			Similarly, using that $\pi$ satisfies (PR4), we obtain that $\overline{\pi}$ satisfies (PR3).}
	\end{example}

	\begin{example}\label{rrep}{\rm
			Let $H$ be a Hopf algebra and $\cdot: H\otimes A\to A$ be a left partial action of $H$ on an algebra $A$. By Example 3.5 in \cite{ABV}, the linear map \begin{align} \label{rep-induced}
				\pi: H\to {\rm End}(A),\quad  \pi(h)(a)=h\cdot a,\quad  h\in H,\,\, a\in A,
			\end{align}
			is a partial representation of $H$ in  ${\rm End}(A)$. It follows from the previous example that the linear map $\overline{\pi}: \opcop H\to \op{{\rm End}(A)}$ given by
			\[\overline{\pi}(h)=\pi(h),\quad h\in \opcop{H},\] 
			is a partial representation. }
	\end{example}
	
	\subsection{The partial smash product}
	
	Let $\cdot: H\otimes A\to A$ be a partial action of a Hopf algebra $H$ over an algebra $A$. On the vector space $A\otimes  H$, one can define the following associative product $$(a\otimes h)(b\otimes k)=a(h_{(1)}\cdot b)\otimes (h_{(2)}k),\quad a,b\in A,\,\,h,k\in H.$$  
	Following \cite{CJ}, the {\it (left) partial smash product} is the vector subspace $$\underline{A\# H}:=(A\otimes  H)(1_A\otimes 1_H)$$ of $A\otimes H$ which is  generated by elements of the form 
	\[a\#h=a(h_{(1)}\cdot 1_A)\otimes h_{(2)}, \quad a\in A,\,h\in H.\]
	Notice that, by construction, $\underline{A\# H}$ is a unital algebra and $1_{\underline{A\# H}}=1_A\#1_H$. \smallbreak

	We need the following notion to recall the universal property of the partial smash product proved in \cite{ABV}.
	
	\begin{definition}{\rm
			Let $A$ and $B$ be algebras, $H$ be a Hopf algebra and $\cdot:H\otimes A\to A$ be a partial action of $H$ on $A$. A {\it left covariant pair} associated to these data is a pair of maps $(\psi, \pi),$ where $\psi:A\to B$ is an algebra morphism and $\pi:H\to B$ is a partial representation, that satisfies: for all $h\in H$ and $a\in A$, 
			\begin{enumerate}
				\item[\footnotesize{(CP1)}]$\,$ $\psi(h\cdot a) =\pi(h_{(1)})\psi(a)\pi(S(h_{(2)})$,\vspace{.1cm}
				\item[\footnotesize{(CP2)}]$\,$ $\psi(a)\pi(S(h_{(1)}))\pi(h_{(2)})=\pi(S(h_{(1)}))\pi(h_{(2)})\psi(a).$
		\end{enumerate} }
	\end{definition}
	
	Consider the following linear maps
	\begin{align*}
		&\phi_0:A\to \underline{A\# H},\quad a\mapsto a\#1_H,& &\pi_0:H\to \underline{A\# H},\quad h\mapsto 1_A\#h.&
	\end{align*}
	It was proved in \cite{ABV} that $\phi_0$ is an algebra monomorphism and $\pi_0$ is a partial representation.
	The next result is Theorem 3.9 of \cite{ABV}.
	\begin{theorem}\label{luniv}
		Let $A$ and $B$ be unital algebras and $\cdot:H\otimes A\to A$ be a  left
		partial action of the Hopf algebra $H$ on $A$. Suppose that $(\psi,\pi)$ is a covariant pair associated to these data.
		Then the map
		\begin{equation*}\label{lphi}
			\Phi:\underline{A\# H}\to B,\,\,\, a\#h\to \psi(a)\pi(h),\,\,\, a\in A,\,\,\, h\in H,
		\end{equation*}  is the unique algebra morphism such that $\psi=\Phi\circ\phi_0$ and $\pi=\Phi\circ\pi_0.$
	\end{theorem}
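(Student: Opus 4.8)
The plan is to verify that the map $\Phi$ is well defined, multiplicative, unital, that it satisfies the two compatibility conditions $\psi = \Phi \circ \phi_0$ and $\pi = \Phi \circ \pi_0$, and finally that it is the unique such algebra morphism. For well-definedness, I would first note that $\Phi$ is defined as the restriction to $\underline{A\# H}$ of the obvious linear map $A \otimes H \to B$, $a \otimes h \mapsto \psi(a)\pi(h)$; restricting to the subspace $\underline{A\# H} = (A\otimes H)(1_A \otimes 1_H)$ is automatically fine, but one must check that the formula $a\# h \mapsto \psi(a)\pi(h)$ is consistent, i.e. agrees on the two presentations $a\# h = a(h_{(1)}\cdot 1_A) \otimes h_{(2)}$. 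Concretely, I would check that $\Phi$ applied to $a(h_{(1)}\cdot 1_A) \otimes h_{(2)}$, namely $\psi(a(h_{(1)}\cdot 1_A))\pi(h_{(2)}) = \psi(a)\psi(h_{(1)}\cdot 1_A)\pi(h_{(2)})$, equals $\psi(a)\pi(h)$. Using (CP1) with $a = 1_A$ we get $\psi(h_{(1)}\cdot 1_A) = \pi(h_{(1)})\psi(1_A)\pi(S(h_{(2)})) = \pi(h_{(1)})\pi(S(h_{(2)}))$, so $\psi(h_{(1)}\cdot 1_A)\pi(h_{(2)}) = \pi(h_{(1)})\pi(S(h_{(2)}))\pi(h_{(3)})$, and then (PR3) (or rather its consequence, collapsing $\pi(S(h_{(2)}))\pi(h_{(3)})$ against the leading $\pi(h_{(1)})$) reduces this to $\pi(h)$. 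This shows the linear map on $A\otimes H$ already sends $a\otimes h - a(h_{(1)}\cdot 1_A)\otimes h_{(2)}$ into a form where $\Phi$ is consistent, hence $\Phi$ is a well-defined linear map on $\underline{A\# H}$.

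Next I would check unitality: $\Phi(1_A \# 1_H) = \psi(1_A)\pi(1_H) = 1_B 1_B = 1_B$ by the unitality of $\psi$ and (PR1). For multiplicativity, I would take two generators $a\# h$ and $b\# k$ and compute their product in $\underline{A\# H}$. The product is $(a\# h)(b\# k) = a(h_{(1)}\cdot b)\# h_{(2)}k$ — more precisely $a(h_{(1)}\cdot b)(h_{(2)}\cdot 1_A)\otimes h_{(3)}k$, but since this lies in the smash product it can be written as $a(h_{(1)}\cdot b)\# h_{(2)}k$. Applying $\Phi$ gives $\psi(a(h_{(1)}\cdot b))\pi(h_{(2)}k) = \psi(a)\psi(h_{(1)}\cdot b)\pi(h_{(2)}k)$. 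On the other hand $\Phi(a\# h)\Phi(b\# k) = \psi(a)\pi(h)\psi(b)\pi(k)$. So I need $\psi(h_{(1)}\cdot b)\pi(h_{(2)}k) = \pi(h)\psi(b)\pi(k)$. Expanding the left side with (CP1), $\psi(h_{(1)}\cdot b) = \pi(h_{(1)})\psi(b)\pi(S(h_{(2)}))$, so the left side is $\pi(h_{(1)})\psi(b)\pi(S(h_{(2)}))\pi(h_{(3)}k)$. Here I would use (PR5) (with the roles $S(h_{(2)})$ against $h_{(3)}$) to rewrite $\pi(S(h_{(2)}))\pi(h_{(3)}k)$, and then reassemble via coassociativity and the counit to collapse down to $\pi(h_{(1)})\psi(b)\pi(h_{(2)}k)$; a further application of the partial-representation identities together with (CP2) — which lets us commute $\psi(b)$ past the "projection" $\pi(S(h_{(1)}))\pi(h_{(2)})$ type elements — should produce $\pi(h)\psi(b)\pi(k)$. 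This is the step I expect to be the main obstacle: it is the one genuine computation, requiring careful bookkeeping of Sweedler indices and repeated use of (PR2)–(PR5) and (CP1)–(CP2), and it is essentially where the axioms of a covariant pair are doing their work.

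Then I would verify the two factorization identities. For $\psi = \Phi \circ \phi_0$: $\Phi(\phi_0(a)) = \Phi(a\# 1_H) = \psi(a)\pi(1_H) = \psi(a)1_B = \psi(a)$, using (PR1) and unitality of $B$. For $\pi = \Phi \circ \pi_0$: $\Phi(\pi_0(h)) = \Phi(1_A \# h) = \psi(1_A)\pi(h) = 1_B \pi(h) = \pi(h)$, using that $\psi$ is unital. Finally, for uniqueness, suppose $\Phi'$ is any algebra morphism with $\psi = \Phi'\circ\phi_0$ and $\pi = \Phi'\circ\pi_0$. Since $a\# h = (a\#1_H)(1_A \# h) = \phi_0(a)\pi_0(h)$ in $\underline{A\# H}$ — which follows because $(a\#1_H)(1_A\#h) = a(1_{H(1)}\cdot 1_A)\# 1_{H(2)}h = a\# h$ — any multiplicative $\Phi'$ respecting both factorizations must satisfy $\Phi'(a\# h) = \Phi'(\phi_0(a))\Phi'(\pi_0(h)) = \psi(a)\pi(h) = \Phi(a\# h)$; since such elements span $\underline{A\# H}$, we conclude $\Phi' = \Phi$. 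This completes the proof.
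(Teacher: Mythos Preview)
Your proposal is correct. The paper does not actually prove this theorem; it merely cites it as Theorem~3.9 of \cite{ABV}, so there is no in-paper argument to compare against directly. Your approach is the standard one and is exactly parallel to the proof the paper \emph{does} give for the opposite-side analogue, Theorem~\ref{runiv}.

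The one step you flag as uncertain---multiplicativity---goes through cleanly and deserves to be written out rather than left as ``should produce'': starting from
\[
\pi(h_{(1)})\psi(b)\pi(S(h_{(2)}))\pi(h_{(3)}k),
\]
apply (PR5) (read right to left) to get $\pi(h_{(1)})\psi(b)\pi(S(h_{(2)}))\pi(h_{(3)})\pi(k)$; then (CP2) commutes $\psi(b)$ past $\pi(S(h_{(2)}))\pi(h_{(3)})$, giving $\pi(h_{(1)})\pi(S(h_{(2)}))\pi(h_{(3)})\psi(b)\pi(k)$; finally (PR3) collapses $\pi(h_{(1)})\pi(S(h_{(2)}))\pi(h_{(3)})$ to $\pi(h)$, exactly as in your well-definedness computation. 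No further tricks are needed.
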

	%\begin{example}\label{lsmash} Let $\cdot:H\otimes A\to A$ be a left
	%partial action of the Hopf algebra $H$ on an algebra $A$
	%$L$ be a left $A$-module and a $H$-partial module. It follows from 
	%Example 3.5 of \cite{ABV}  and from the proof of Theorem 3.10 in \cite{ABV} that the map 
	%$$\pi:H\to {\rm End}(L), \quad \pi(h)(l)=h\cdot l,\quad h\in H,\,\,l\in L $$ is a partial representation and $\psi:A\to {\rm End}(L), \psi(a)(l)=a l,\,\, a\in A,\,\, l\in L$ is an algebra morphism. Moreover, if $(\psi, \pi)$ satisfy, (CP1). Thus, by Theorem \ref{luniv}, $L$ is a left $\underline{A\# H}$-module with action $(a\#h)l=a(h\cdot l)$, for all $a\in A,\,\,h\in H,\,\, l\in L$.
	%\end{example}
	
	Consider now a right partial action $\cdot:A\otimes H\to A$ of a Hopf algebra $H$ on an algebra $A$. As in the left side case, one can define the right partial smash product. Precisely, one endows the vector space $H\otimes  A$ with  an associative product $$(k\otimes b)(h\otimes a)=kh_{(1)}\otimes (b\cdot h_{(2)})a,\quad a,b\in A,\,\,h,k\in H.$$ 
	\begin{definition}\label{right-smash}
		{\rm Let $\cdot:A\otimes H\to A$ be a  right partial action of a Hopf algebra $H$ on an algebra $A$.
			The {\it (right) partial smash product} is the vector subspace $$\underline{H\# A}:=(1_H\otimes 1_A)(H\otimes  A)$$ of $H\otimes A$ which is generated by typical elements of the form 
			\begin{equation}\label{elsm}
				h\#a=h_{(1)}\otimes (1_A\cdot h_{(2)})a, \,\,\, a\in A,\,h\in H.
			\end{equation}
			The right partial smash product $\underline{H\# A}$ is an associative unital algebra and its unity element is $1_{\underline{H\# A}}=1_H\#1_A$.
		}
	\end{definition}

	\begin{example} Suppose that  $ \alpha =\left(  A_g, \alpha_g  \right)_{g\in \tG}$ is a unital partial action of a group $\tG$ on an algebra $A$. As we saw in Example \ref{ex-pa-group},  we have a left partial action of $\Bbbk \tG$ on $A$ given by $g\cdot a=\alpha_g(a1_{g\m})$, for all $a\in A$ and $g\in \tG$. We recall from \cite{DE} that the {\it partial crossed product by $\alpha$} is the associative algebra $A\rtimes_{\alpha} \tG$ whose elements are finite formal sums in the form $\sum_{g\in \tG}a_g\delta_g$, where $a_g\in A_g$, and the multiplication is given by 
		\[a_g\delta_g\cdot a_h\delta_h=a_g\alpha_g(a_h1_{g\m})\delta_{gh}, \,\,\,g,h\in \tG, \,\,\,a_g\in A_g,\,\,a_h\in A_h.\]
		Using \eqref{elsm} we get  that  the map %$\eta:A\# \Bbbk \tG\to A\rtimes_{\alpha} \tG$ defined by
		\begin{equation}\label{lsm}
			\eta:\underline{A\# \Bbbk \tG}\to A\rtimes_{\alpha} \tG, \,\,a\# g\mapsto a1_{g}\delta_{g},\quad g\in \tG,\,\,a\in A,\end{equation} is an algebra isomorphism with inverse $b\delta_g\mapsto b\# g, \,\,b\in A_g, \, g\in \tG.$ Moreover,
		it follows by Example \ref{ltor}, that the partial action $\alpha$ of $\tG$ on $A$ induces a right partial action of $(\Bbbk G)^{\rm op}=(\Bbbk G)^{\rm opcop}$ on $\op{A}$ via $a\cdot g=\alpha_g(a1_{g\m})$, for all $a\in A$ and $g\in G$. Also, by \eqref{elsm} it is not difficult  to verify that 
		\begin{equation}\label{rsm} \lambda:\underline{(\Bbbk \tG)^{\rm op}\# A}\to ( \op A\ltimes_{\alpha} \tG)^{\rm op},\,\,\,    g\# a\mapsto a1_{g}\delta_{g},\quad g\in \tG,\,\,a\in A,\end{equation} 
		is an algebra isomorphism.
	\end{example}
	\begin{definition}\label{cov-case-op}
		{\rm Let $A$ and $B$ be algebras, $H$ be a Hopf algebra and $\cdot:A\otimes \op{H}\to A$ be a right partial action of $\op{H}$ on $A$. A {\it opposite covariant pair} associated to these data is a pair of maps $(\varphi, \gamma),$ where $\varphi:A\to {B}$ is an algebra morphism and $\gamma:\opcop{H}\to {B}$ is a partial representation, that satisfies: for all $h\in \opcop{H}$ and $a\in A$, 
			\begin{enumerate}
				\item [\footnotesize{(OCP1)}]$\,$  $\varphi(a\cdot h) =\gamma(S\m(h_{[2]}))\varphi(a)\gamma(h_{[1]}))$,\vspace{.1cm}
				\item [\footnotesize{(OCP2)}]$\,$ $\varphi(a)\gamma(\copr{h}{[2]})\gamma(S\m(h_{[1]}))=\gamma(h_{[2]})\gamma(S\m(h_{[1]}))\varphi(a)$.
		\end{enumerate} }
	\end{definition}

	\begin{lemma} Let $H$ be a Hopf algebra and $A$ be a right partial $\op H$-module. Then the following assertions hold:
		\begin{enumerate}[\rm (i)]
			\item the linear map $\varphi_0:A\to \underline{\op H\# A},\,\, a\mapsto 1_H\#a$, is an algebra monomorphism,\vspace{.1cm}
			\item the linear map $\gamma_0:\opcop H\to \underline{\op H\# A},\,\, h\mapsto h\#1_A$, is a partial representation,\vspace{.1cm}
			\item $(\varphi_0, \gamma_0)$ is an opposite covariant pair.
		\end{enumerate}    
	\end{lemma}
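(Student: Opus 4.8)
The statements (i)--(iii) are the right-handed, ``opposite'' analogues of the facts recalled from \cite{ABV} just before Theorem~\ref{luniv}: for a left partial $H$-module $A'$, the maps $\phi_0\colon A'\to\underline{A'\#H}$ and $\pi_0\colon H\to\underline{A'\#H}$ are an algebra monomorphism and a partial representation, and $(\phi_0,\pi_0)$ is a left covariant pair. My plan is to reduce (ii)--(iii) to these; (i) I would instead check directly. For (i): since $\Delta(1_H)=1_H\otimes1_H$ and $1_A\cdot1_H=1_A$ by (RPA1), the generator $1_H\#a$ equals $1_H\otimes a$, and multiplying two such elements with the product of $\underline{\op H\#A}$ and using (RPA1) again gives $1_H\otimes ab$; hence $\varphi_0$ is a unital algebra map, and it is injective since $a\mapsto1_H\otimes a$ is already injective on $H\otimes A$.

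For (ii)--(iii) the first move is to transport the action: put the opposite multiplication on $A$ and define $h\triangleright a:=a\cdot h$. A routine check against (RPA1)--(RPA3) shows that $\triangleright$ makes $\op A$ a left partial module over a suitable coopposite of $H$, whose antipode is $S^{-1}$ --- which is why $S^{-1}$ occurs in Definition~\ref{cov-case-op}. The core of the argument is then to show that the flip $h\otimes a\mapsto a\otimes h$ restricts to a bijection from $\underline{\op H\#A}$ onto the corresponding left partial smash product of $\op A$, and that this bijection \emph{reverses} products, i.e.\ is an anti-isomorphism of algebras. Concretely one compares $(h_{(1)}k)\otimes\big((b\cdot h_{(2)})a\big)$ with the product of $a\otimes h$ and $b\otimes k$ in that left smash product, and one checks that the generator $h\#a=h_{(1)}\otimes(1_A\cdot h_{(2)})a$ is carried to the generator $(1_A\cdot h_{(2)})a\otimes h_{(1)}$ --- which is also what makes the flip well defined between the two subspaces, not merely between $H\otimes A$ and $A\otimes H$. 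Under this anti-isomorphism, $\varphi_0(a)=1_H\#a$ corresponds to $\phi_0(a)=a\#1_H$ and $\gamma_0(h)=h\#1_A$ corresponds to $\pi_0(h)=1_A\#h$.

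Granting the anti-isomorphism, (ii) follows: composing the partial representation $\pi_0$ with an algebra anti-isomorphism again gives a partial representation, now of the opposite--coopposite Hopf algebra --- precisely the mechanism of Example~\ref{rrep1} --- so $\gamma_0$ is the partial representation of $\opcop H$ required by Definition~\ref{cov-case-op}. For (iii) one pushes the identities (CP1)--(CP2) of the left covariant pair $(\phi_0,\pi_0)$ through the anti-isomorphism; the reversal of all products, together with the passage to $\Delta_{\mathrm{cop}}$ and to $S^{-1}$, turns (CP1) into (OCP1) and (CP2) into (OCP2) for $(\varphi_0,\gamma_0)$. The one step I expect to be real work is the bookkeeping: at each line one must track which of $H$, $\op H$, $H^{\mathrm{cop}}$, $\opcop H$ and which antipode ($S$ or $S^{-1}$) is in force, and one must check that the flip genuinely descends to the smash products. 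No idea beyond the left-handed theory of \cite{ABV} and Example~\ref{rrep1} is needed; alternatively, (i)--(iii) admit a direct but longer verification straight from (RPA1)--(RPA3) and (PR1)--(PR5).
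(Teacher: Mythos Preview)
Your reduction strategy is genuinely different from the paper's proof, which is a direct computation of (PR2)--(PR3) and (OCP1)--(OCP2) inside $\underline{\op H\#A}$. Your idea---transport via the flip to a left partial smash product and invoke the known left-handed facts---is sound, and your verification of the anti-isomorphism $\underline{\op H\#A}\to\underline{\op A\#H^{\mathrm{cop}}}$ is correct: with $h\triangleright a:=a\cdot h$ one indeed gets a left partial action of $H^{\mathrm{cop}}$ on $\op A$, and the flip carries $h_{(1)}\otimes(1_A\cdot h_{(2)})a$ to $(1_A\cdot h_{(2)})a\otimes h_{(1)}$, intertwining the two products in reverse order. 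Parts (i) and (iii) go through exactly as you describe; in particular, pushing (CP1)--(CP2) for $(\phi_0,\pi_0)$ over $H^{\mathrm{cop}}$ through the anti-isomorphism yields precisely (OCP1)--(OCP2) after unwinding $\Delta_{\mathrm{cop}}$ and $S^{-1}$.

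There is, however, a gap in your argument for (ii). The map $\pi_0$ you obtain is a partial representation of $H^{\mathrm{cop}}$ (since $\op A$ is a left partial $H^{\mathrm{cop}}$-module), and Example~\ref{rrep1} then produces a partial representation of $(H^{\mathrm{cop}})^{\mathrm{opcop}}$ in the target of the anti-isomorphism. But $(H^{\mathrm{cop}})^{\mathrm{opcop}}=H^{\mathrm{op}}$, \emph{not} $H^{\mathrm{opcop}}$: taking cop then op then cop returns the original comultiplication and flips only the multiplication. So what your route actually delivers is that $\gamma_0$ is a partial representation of $H^{\mathrm{op}}$ (with antipode $S^{-1}$), whereas the lemma requires $H^{\mathrm{opcop}}$ (with antipode $S$). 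These are different sets of identities. The fix is short but is not what you wrote: from (PR1)--(PR3) for $H^{\mathrm{op}}$ one deduces (PR4)--(PR5) for $H^{\mathrm{op}}$, and the substitution $k\mapsto S(k)$ shows that (PR4)--(PR5) for $H^{\mathrm{op}}$ are exactly (PR2)--(PR3) for $H^{\mathrm{opcop}}$. Once this bookkeeping step is inserted, your approach is complete and gives a clean conceptual alternative to the paper's direct verification.
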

	
	\begin{proof}
		It is straightforward to verify that (i) is true. To prove (ii), we observe that $\gamma_0(1_H)=1_H\#1_A=1_{\underline{\op H\# A}}$. Also, given $h,k\in \opcop{H}$, we have that
		\begin{align*}
			\gamma_0(h)\gamma_0(k_{[1]})\gamma_0(S(k_{[2]}))&=(h\#1_A)(\copr{k}{(2)}  \# 1_A)(S(\copr{k}{(1)}) \# 1_A )
			\\&=(\copr{k}{(2)}h  \# 1_A\cdot\copr{k}{(3)})(S(\copr{k}{(1)}) \# 1_A )
			\\&=S(\copr{k}{(2)})\copr{k}{(3)}h  \# (1_A\cdot\copr{k}{(4)})\cdot S(\copr{k}{(1)})
			\\&=h  \# (1_A\cdot\copr{k}{(2)})\cdot S(\copr{k}{(1)})
			\\&=h  \# (1_A\cdot S(\copr{k}{(2)})\copr{k}{(3)})(1_A\cdot S(\copr{k}{(1)})) \qquad (\text{by } {\rm{({RPA}3)}})
			\\&=h  \# 1_A\cdot S(k).
		\end{align*}
		On the other hand,
		\begin{align*}
			\gamma_0(h\cdot_{op} k_{[1]})\gamma_0(S(k_{[2]}))&=(\copr{k}{(2)}h\#1_A)(S(k_{(1)})\#1_A)
			\\&=S(k_{(2)}) \copr{k}{(3)}h\#1_A\cdot S(k_{(1)}) 
			\\&=h\#1_A\cdot S(k), 
		\end{align*}   
		and consequently (PR2) holds. Finally, notice that 
		\begin{align*}
			\gamma_0(\copr{h}{[1]})\gamma_0(S(\copr{h}{[2]}))\gamma_0(k)&= (\copr{h}{(2)}  \# 1_A)(S(\copr{h}{(1)}) \# 1_A )(k\#1_A)   \\&=  (S (\copr{h}{(2)})\copr{h}{(3)}\# 1_A\cdot S (\copr{h}{(2)}))(k\#1_A) 
			\\&=  (1_H\# 1_A\cdot S (h))(k\#1_A)  
			\\&=  \copr{k}{(1)}\#( 1_A\cdot S (h))\cdot \copr{k}{(2)}, 
		\end{align*}
		and
		\begin{align*}
			\gamma_0(\copr{h}{[1]})\gamma_0(S(\copr{h}{[2]}) \cdot_{op} k)&= ( \copr{h}{(2)}  \# 1_A)(kS(\copr{h}{(1)} ) \# 1_A)
			\\&=k_{(1)}S(\copr{h}{(2)} )\copr{h}{(3)}\# 1_A\cdot k_{(2)}S(\copr{h}{(1)} )
			\\&=k_{(1)}\# 1_A\cdot k_{(2)}S(h) 
			\\&=k_{(1)}\# 1_A\cdot (S(h)\cdot_{\rm op}k_{(2)})
			\\&=k_{(1)}\# (1_A\cdot k_{(2)})(1_A\cdot (S(h)\cdot_{\rm op} k_{(3)}) \,\qquad \text{(by \eqref{elsm})}
			\\&=k_{(1)}\# (1_A\cdot S (h))\cdot \copr{k}{(2)},\qquad \quad\qquad\qquad (\text{by } {\rm{({RPA}3)}})
		\end{align*}
		which implies that (PR3) holds, and hence (ii) is proved. To prove (iii) we note that
		\begin{align*}
			\gamma_0(S\m(h_{[2]}))\varphi_0(a)\gamma_0(h_{[1]}))&=(S\m(h_{(1)})\#1_A)(1_H\# a) (h_{(2)}\#1_A)
			\\ &=(S\m(h_{(1)})\# a)(h_{(2)}\#1_A)   
			\\&=h_{(2)}S\m(h_{(1)})\# a\cdot h_{(3)}
			\\&=1_H\# a\cdot h
			\\&=\varphi_0(a\cdot h).\end{align*} 
		Also,
		\begin{align*}\varphi_0(a)\gamma_0(\copr{h}{[2]})\gamma_0(S\m(h_{[1]}))\,\,&=\,\,(1_H\# a)(h_{(1)}\#1_A)(S\m(h_{(2)})\#1_A)
			\\ &=\,\,(h_{(1)}\# a\cdot h_{(2)})(S\m(h_{(3)})\#1_A)
			\\ &= \,\,S\m(h_{(4)})h_{(1)}\# (a\cdot h_{(2)})\cdot S\m(h_{(3)})
			\\ &\overset{\mathclap{\scriptscriptstyle{\rm (RPA3)}}}{=}\,\, S\m(h_{(5)})h_{(1)}\# (1_A\cdot S\m(h_{(4)}))(a\cdot (h_ {(2)}\cdot_{\rm op}S\m(h_{(3)})) )
			\\ &=\,\, S\m(h_{(5)})h_{(1)}\# (1_A\cdot S\m(h_{(4)}))(a\cdot S\m(h_{(3)})h_ {(2)} )
			\\ &=\,\, S\m(h_{(3)})h_{(1)}\# (1_A\cdot S\m(h_{(2)}))a,
		\end{align*} 
		and 
		\begin{align*}\gamma_0(h_{[2]})\gamma_0(S\m(h_{[1]}))\varphi_0(a)&=(h_{(1)}\#1_A)(S\m(h_{(2)})\#1_A)(1_H\# a)
			\\ &=(h_{(1)}\#1_A)(S\m(h_{(2)})\#a)
			\\ &=S\m(h_{(3)})h_{(1)}\#(1_A\cdot S\m(h_{(2)}))a,
		\end{align*}
		then $\gamma_0(h_{[2]})\gamma_0(S\m(h_{[1]}))\varphi_0(a)=\varphi_0(a)\gamma_0(\copr{h}{[2]})\gamma_0(S\m(h_{[1]})),$ which finishes the proof.
	\end{proof}
	
	\vspace{.2cm}
	
	We now establish a universal property for opposite covariant pairs, analogous to the version for covariant pairs presented in Theorem 2.25.

	\begin{theorem}\label{runiv}
		Let $A$ and $B$ be unital algebras and $\cdot: A\otimes \op H\to A$ be a  right
		partial action of the Hopf algebra $H$ on $A$. Suppose that $(\varphi,\gamma)$ is an opposite covariant pair associated with these data.
		Then there exists a unique algebra morphism $\Gamma:\underline{\op H\#A} \to B$ such that $\varphi=\Gamma\circ\varphi_0$ and $\gamma=\Gamma\circ\gamma_0$.
	\end{theorem}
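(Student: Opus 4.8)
The plan is to follow the scheme of the proof of Theorem~\ref{luniv}, read through the opposite/coopposite dictionary. One first records the elementary facts that $1_H\otimes 1_A$ is the unit of $\underline{\op H\#A}$, that consequently every element of $\underline{\op H\#A}$ is a finite sum of typical elements $h\#a=h_{(1)}\otimes(1_A\cdot h_{(2)})a$, and that $h\#a=\gamma_0(h)\varphi_0(a)$. Hence any algebra morphism $\Gamma$ with $\varphi=\Gamma\circ\varphi_0$ and $\gamma=\Gamma\circ\gamma_0$ must satisfy $\Gamma(h\#a)=\gamma(h)\varphi(a)$, which settles uniqueness; only existence requires an argument.

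To produce $\Gamma$, I would take the linear map $\widetilde{\Gamma}\colon \op H\otimes A\to B$, $\widetilde{\Gamma}(h\otimes a)=\gamma(h)\varphi(a)$, and put $\Gamma:=\widetilde{\Gamma}|_{\underline{\op H\#A}}$; being the restriction of a well-defined linear map, $\Gamma$ is well defined. Evaluating on a typical element and using (OCP1) with $a=1_A$ to rewrite $\varphi(1_A\cdot h_{(2)})$ yields
\[
\Gamma(h\#a)=\gamma(h_{(1)})\,\varphi(1_A\cdot h_{(2)})\,\varphi(a)=\gamma(h_{(1)})\,\gamma(S^{-1}(h_{(2)}))\,\gamma(h_{(3)})\,\varphi(a),
\]
and the inner $\gamma$-string collapses by the identity
\[
\gamma(h_{(1)})\,\gamma(S^{-1}(h_{(2)}))\,\gamma(h_{(3)})=\gamma(h),
\]
which is the opposite/coopposite counterpart of $\pi(h_{(1)})\pi(S(h_{(2)}))\pi(h_{(3)})=\pi(h)$ used in Theorem~\ref{luniv}: it is the instance for the Hopf algebra $\opcop H$ of a general partial-representation identity deduced from (PR1), (PR2) and the antipode axioms, the coopposite coproduct accounting for the placement of the legs. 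It follows that $\Gamma(h\#a)=\gamma(h)\varphi(a)$, whence $\Gamma\circ\varphi_0=\varphi$, $\Gamma\circ\gamma_0=\gamma$ and $\Gamma(1_H\#1_A)=1_B$.

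It remains -- and this is the crux -- to show that $\Gamma$ is multiplicative, for which, by the factorization above, it suffices to check $\Gamma\bigl((h\#a)(k\#b)\bigr)=\gamma(h)\varphi(a)\gamma(k)\varphi(b)$ on typical elements. I would expand $(h\#a)(k\#b)$ inside $\underline{\op H\#A}$ from the definition of the right partial smash product, use (RPA1)--(RPA3) to absorb the terms involving the action on $1_A$, apply $\widetilde{\Gamma}$, and then rearrange by means of (OCP2) -- to commute $\varphi(a)$ past the $\varphi(A)$-central element $\gamma(k_{[2]})\gamma(S^{-1}(k_{[1]}))$ -- together with (PR2)--(PR5) for $\gamma$ and the identity above. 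This is the only genuinely computational step, and is the precise analogue, under the correspondences of Examples~\ref{ltor} and~\ref{rrep1}, of the use of (CP2) in the proof of Theorem~\ref{luniv}; everything else is formal. Alternatively, the statement can be deduced from Theorem~\ref{luniv} by transporting it along those correspondences, once $\underline{\op H\#A}$ is identified with an ordinary left partial smash product, but the antipode and coproduct bookkeeping is essentially the same.
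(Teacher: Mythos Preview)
Your proposal is correct and follows essentially the same route as the paper: define $\Gamma(h\#a)=\gamma(h)\varphi(a)$, check the factorizations through $\varphi_0,\gamma_0$, and verify multiplicativity by expanding $(k\#b)(h\#a)$ and applying (OCP1), (OCP2) and the partial-representation identities for $\gamma$. The paper carries out the multiplicativity chain in full (using (OCP1), then (PR4), then (OCP2), then (PR2)), whereas you only sketch it; conversely, you are more explicit about well-definedness by first defining $\widetilde\Gamma$ on $\op H\otimes A$ and then checking that its restriction satisfies $\Gamma(h\#a)=\gamma(h)\varphi(a)$ via (OCP1) and the collapse $\gamma(h_{(1)})\gamma(S^{-1}(h_{(2)}))\gamma(h_{(3)})=\gamma(h)$, a point the paper leaves implicit.
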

	\begin{proof}
		
		Consider the linear map  
		\begin{equation}\label{gammar}
			\Gamma:\underline{\op H\#A} \to B,\qquad \Gamma(h\#a)=\gamma(h)\varphi(a) , \quad h\in H,\,\,a\in A.   \end{equation}
		It is straightforward to see that $\varphi=\Gamma\circ\varphi_0$ and $\gamma=\Gamma\circ\gamma_0$. Given $k,h\in H$ and $a,b\in A$, we have 
		\begin{align*}
			\Gamma((k\#b)(h\# a))&=\Gamma(k\cdot_{\rm op} h_{(1)}\# (b\cdot h_{(2)})a)
			\\&= \gamma(k\cdot_{\rm op} h_{(1)})\varphi((b\cdot h_{(2)})a)
			\\&= \gamma(k\cdot_{\rm op} h_{[2]})\varphi(b\cdot h_{[1]})\varphi(a)
			\\&=\gamma(k\cdot_{\rm op} h_{[3]})\gamma(S\m(h_{[2]})) \varphi(b)\gamma(h_{[1]})\varphi(a)\qquad\quad\qquad\, (\text{by \footnotesize{(OCP1)}})
			\\&=\gamma(k)\gamma( h_{[3]})\gamma(S\m(h_{[2]})) \varphi(b)\gamma(h_{[1]})\varphi(a)\qquad\quad\qquad\,\, (\text{by \footnotesize{(PR4)}})
			\\&=\gamma(k)\varphi(b) \gamma(h_{[3]})\gamma(S\m(h_{[2]}))\gamma(h_{[1]})\varphi(a) \qquad\quad\qquad\,\,  (\text{by \footnotesize{(OCP2)}})
			\\&=\gamma(k)\varphi(b) \gamma(h_{[3]})\gamma(S\m(h_{[2]}))\gamma(S(S\m(h_{[1]})))\varphi(a)
			\\&=\gamma(k)\varphi(b) \gamma(h_{[3]})\gamma(S\m(h)_{[1]})\gamma(S(S\m(h)_{[2]}))\varphi(a)
			\\&=\gamma(k)\varphi(b) \gamma(h_{[3]}\cdot_{\rm op}S\m(h_{[2]}))\gamma(h_{[1]})\varphi(a)\qquad\quad\qquad\,\,  (\text{by \footnotesize{(PR2)}})
			\\&=\gamma(k)\varphi(b)\varphi(a)\gamma(h)\varphi(a)
			\\&=\Gamma(k\#b)\circ \Gamma(h\#a).\end{align*} Since $\Gamma(1_H\#1_A)=1_B$ it follows that $\Gamma$ is a morphism of algebras.
		The proof of the uniqueness of $\Gamma$ is similar to the proof given in \cite[Theorem 3.9]{ABV}. 
	\end{proof}

	\section{The main result}
	
	Throughout this section, $H$ denotes a Hopf algebra with inversible antipode $S$ and $R=(\mdl {M}{i}{j})_{i,j\in \bI_n}$ denotes a generalized matrix algebra such that $\mdl{M}{i}{i}=R_i$ is a unital algebra. The identity element of $R_i$ is denoted by $1_i$. This section is dedicated to prove the main result of this work, which provides necessary and sufficient conditions for the existence of a partial action of $H$ on $R$.
	
	\smallbreak
	
	Given $i,j\in \bI_n$, we denote by $\iota_{ij}:\,\,\!\!\mdl{M}{i}{j}\to R$ the natural inclusion of $\mdl{M}{i}{j}$ in $R$, that is, $\iota_{ij}(m)$ is the matrix whose the entries are all zero except the $(i,j)$-entry which is equal to $m$, for all $m\in\, \!\mdl{M}{i}{j}$. By Remark \ref{bimodule-structure}, $R$ is an $(R_i,R_j)$-bimodule and it is easy to see that $\iota_{ij}$ (resp. $\iota_{ii}$) is a monomorphism of $(R_i,R_j)$-bimodules (resp. of algebras). From now on, we will denote by $\mdl{\tilde{M}}{i}{j}=\iota_{ij}(\mdl{M}{i}{j})$ the $(R_i,R_j)$-subbimodule of $R$ which is isomorphic to $\mdl{M}{i}{j}$.
	
	\begin{definition}\label{rem-restric}
		Let  $\rhd:H\otimes R\to R$ be a left partial action of $H$ on $R$ and $i,j\in \bI_n$. The subbimodule $\mdl{\tilde{M}}{i}{j}$ of $R$ is said {\it $H$-invariant} if $h\rhd m\in\,\! \mdl{\tilde{M}}{i}{j}$, for all $h\in H$ and $m\in\,\! \mdl{\tilde{M}}{i}{j}$.
	\end{definition}

	\begin{lemma}\label{lem-aux-main} 
		Let $\rhd: H\otimes R\to R$ be a left partial action of $H$ on $R$ such that $\mdl{\Tilde{M}}{i}{j}$ is invariant, for all $i,j\in \bI_n$. Then:
		\begin{enumerate} [\rm (i)]
			\item   the linear map $ \rightharpoonup_i :H\otimes R_i \rightarrow R_i$ given by
			\[
			h\rightharpoonup_i r=\iota^{-1}_{ii}\big( h\triangleright \iota_{ii}(r)\big), \quad h\in H,\,\,r\in R_i,
			\]
			is a partial action, \vspace{.1cm}
			\item   the linear map $\pi_{ij}:H\to \End{\mdl{M}{i}{j}}$ given by
			\[\pi_{ij}(h)(m)=\iota\m_{ij}\big(h\rhd \iota_{ij}(m)\big),\quad h\in H,\,\,\,m\in\,\!\mdl{M}{i}{j}, \]
			is a partial representation, \vspace{.1cm}
			\item the linear map $\gamma_{ij}:\opcop{H}\to \End{\mdl{M}{i}{j}}^{\rm op}$ given by
			\[\gamma_{ij}(h)(m)=\iota\m_{ij}\big(h\rhd \iota_{ij}(m)\big),\quad h\in H,\,\,\,m\in\,\!\mdl{M}{i}{j}, \]
			is a partial representation.
		\end{enumerate}
	\end{lemma}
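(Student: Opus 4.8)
The plan is to prove (ii) and (iii) almost formally from the partial--representation theory recalled in Examples~\ref{rrep} and~\ref{rrep1} together with the standing invariance hypothesis, and to prove (i) by a direct check of (LPA1)--(LPA3). The basic tool in all three parts is that, since $\mdl{\tilde{M}}{i}{j}$ is $H$-invariant, $h\rhd\iota_{ij}(m)$ lies in $\mdl{\tilde{M}}{i}{j}=\Img(\iota_{ij})$, so the displayed formulas make sense and
\[
\iota_{ij}\bigl(\pi_{ij}(h)(m)\bigr)=\iota_{ij}\bigl(\gamma_{ij}(h)(m)\bigr)=h\rhd\iota_{ij}(m),\qquad \iota_{ii}(h\rightharpoonup_i r)=h\rhd\iota_{ii}(r).
\]
The one genuinely delicate point is in (i): $\iota_{ii}$ is an algebra monomorphism but it sends $1_i$ to the idempotent $e_i:=\iota_{ii}(1_i)$ of $R$, not to $1_R=\sum_{l\in\bI_n}e_l$, so the ``global'' axiom (LPA3) for $\rhd$ (which involves $1_R$) has to be turned into the ``local'' one for $\rightharpoonup_i$ (which involves $1_i$).

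For (i): (LPA1) is immediate, and (LPA2) follows by applying (LPA2) for $\rhd$ to $\iota_{ii}(rs)=\iota_{ii}(r)\iota_{ii}(s)$, using invariance to see both factors land in $\mdl{\tilde{M}}{i}{i}$, and then applying $\iota_{ii}^{-1}$. For (LPA3), apply the first form of (LPA3) for $\rhd$ to $a=\iota_{ii}(r)$, getting $h\rhd(k\rhd\iota_{ii}(r))=(h_{(1)}\rhd 1_R)(h_{(2)}k\rhd\iota_{ii}(r))$. Expand $h_{(1)}\rhd 1_R=\sum_{l}(h_{(1)}\rhd e_l)$; by invariance $h_{(1)}\rhd e_l\in\mdl{\tilde{M}}{l}{l}$, while $h_{(2)}k\rhd\iota_{ii}(r)\in\mdl{\tilde{M}}{i}{i}$, and since $\mdl{\tilde{M}}{l}{l}\,\mdl{\tilde{M}}{i}{i}=0$ for $l\neq i$ by the block structure of $R$, only the term $l=i$ remains. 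Applying $\iota_{ii}^{-1}$ and the basic identities then yields $h\rightharpoonup_i(k\rightharpoonup_i r)=(h_{(1)}\rightharpoonup_i 1_i)(h_{(2)}k\rightharpoonup_i r)$; the companion identity $=(h_{(1)}k\rightharpoonup_i r)(h_{(2)}\rightharpoonup_i 1_i)$ comes out of the second form of (LPA3) for $\rhd$ in exactly the same way.

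For (ii) and (iii): by Example~\ref{rrep} the map $\pi\colon H\to\End R$, $\pi(h)(a)=h\rhd a$, is a partial representation, and invariance says exactly that each $\pi(h)$ preserves $\mdl{\tilde{M}}{i}{j}$, so $\pi$ has image in the unital subalgebra $B'\subseteq\End R$ of endomorphisms preserving $\mdl{\tilde{M}}{i}{j}$. Restriction to $\mdl{\tilde{M}}{i}{j}$ followed by conjugation by the isomorphism $\iota_{ij}\colon\mdl{M}{i}{j}\to\mdl{\tilde{M}}{i}{j}$ is a unital algebra morphism $\rho\colon B'\to\End{\mdl{M}{i}{j}}$, and $\pi_{ij}=\rho\circ\pi$. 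Since the composite of a partial representation with a unital algebra morphism is again a partial representation---each of (PR1)--(PR3) for $\rho\circ\pi$ follows by applying $\rho$ to the corresponding identity for $\pi$---this proves (ii); one may instead verify (PR1)--(PR3) for $\pi_{ij}$ directly by transporting those for $\pi$ through $\iota_{ij}$ via the basic identity. Part (iii) is then immediate from Example~\ref{rrep1} applied to the partial representation $\pi=\pi_{ij}$: it produces a partial representation $\opcop{H}\to\End{\mdl{M}{i}{j}}^{\rm op}$, $h\mapsto\pi_{ij}(h)$, which is literally $\gamma_{ij}$. I expect the only real obstacle to be the unit bookkeeping in (LPA3) of part (i); once invariance is in hand, (ii) and (iii) are formal.
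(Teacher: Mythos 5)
Your proposal is correct. Parts (i) and (iii) coincide with the paper's argument: for (LPA3) in (i) the paper uses exactly the observation you isolate, namely that $(h\rhd 1_R)\,\iota_{ii}(r)=(h\rhd\iota_{ii}(1_i))\,\iota_{ii}(r)$ because invariance places each $h\rhd\iota_{ll}(1_l)$ in $\mdl{\tilde M}{l}{l}$ and the block structure kills the cross terms $\mdl{\tilde M}{l}{l}\,\mdl{\tilde M}{i}{i}$ for $l\neq i$; and (iii) is obtained from (ii) via Example \ref{rrep1} in both treatments. Where you genuinely diverge is (ii): the paper verifies (PR1)--(PR2) for $\pi_{ij}$ by a direct computation with (LPA3) inside $R$ (and leaves (PR3) as ``similarly''), whereas you factor $\pi_{ij}=\rho\circ\pi$ through the global partial representation $\pi\colon H\to\End{R}$ of Example \ref{rrep}, note that invariance means $\pi$ lands in the unital subalgebra of endomorphisms preserving $\mdl{\tilde M}{i}{j}$, and invoke the (easily checked, though not stated in the paper) fact that postcomposing a partial representation with a unital algebra morphism yields a partial representation. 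Your route is shorter and makes transparent that (ii) is purely formal once invariance is granted, at the cost of introducing the auxiliary subalgebra and the closure principle; the paper's computation is self-contained and is reused almost verbatim later in the proof of Theorem \ref{teo-pa-gma}, which is presumably why the authors spell it out.
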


	\begin{proof} (i) 
		Let $r\in R_i$. Then
		\[
		1_H \rightharpoonup_i r =\iota^{-1}_{ii}\big( 1_H \triangleright \iota_{ii}(r)\big) =\iota^{-1}_{ii}\big(  \iota_{ii}(r)\big) =r,
		\]
		which implies (LPA1).
		For (LPA2), consider $r,s \in R_i$ and $h\in H$. Thus
		\begin{align*}
			h \rightharpoonup_i (rs) & = 
			\iota^{-1}_{ii}\big( h \triangleright \iota_{ii}(rs)\big) \\
			& = \iota^{-1}_{ii}\big( h \triangleright (\iota_{ii}(r) \iota_{ii}(s))\big) \\
			& =  \iota^{-1}_{ii}\big( \left(h_{(1)} \triangleright \iota_{ii}(r)\right) \left(h_{(2)} \triangleright\iota_{ii}(s)\right)\big) \\
			& = \iota^{-1}_{ii} \left(h_{(1)} \triangleright \iota_{ii}(r)\right) \iota^{-1}_{ii}\left(h_{(2)} \triangleright\iota_{ii}(s)\right) \\
			& = \left( h_{(1)} \rightharpoonup_i r \right) \left( h_{(2)} \rightharpoonup_i r \right) .
		\end{align*}
		Finally, in order to verify (LPA3), consider $i\in \bI_n$, $h\in H$ and $r\in R_i$. Then
		\begin{eqnarray*}
			(h\triangleright 1_R)\iota_{ii}(r) = (h\triangleright \iota_{ii}(1_i))\iota_{ii} (r) ,\qquad\iota_{ii}(r)(h\triangleright 1_R) = \iota_{ii}(r)(h\triangleright \iota_{ii}(1_i)) .
		\end{eqnarray*}
		Then, for every $h,k\in H$ and $r\in R_i$
		\begin{align*}
			h\rightharpoonup_i (k\rightharpoonup_i r) & =\,\, \iota^{-1}_{ii}\big( h \triangleright \big( k\triangleright \iota_{ii}(r)\big) \big) \\
			& = \,\, \iota^{-1}_{ii}\big( \left( h_{(1)}  \triangleright 1_R \right) \left( h_{(2)}k\triangleright \iota_{ii}(r)\right) \big) \\
			& \stackrel{(\ast)}{=}  \iota^{-1}_{ii}\big( \left( h_{(1)}  \triangleright \iota_{ii}(1_i) \right) \left( h_{(2)}k\triangleright \iota_{ii}(r)\right) \big) \\
			& = \,\, \iota^{-1}_{ii} \left( h_{(1)}  \triangleright \iota_{ii}(1_i) \right) \iota^{-1}_{ii}\left( h_{(2)}k\triangleright \iota_{ii}(r)\right)  \\
			& =  \,\left( h_{(1)} \rightharpoonup_i 1_i \right)\left( h_{(2)}k \rightharpoonup_i r \right),
		\end{align*}
		where ($\ast$) follows from the $H$-invariance of $\tilde{R}_i$. Similarly, one verifies that
		\[
		h\rightharpoonup_i (k\rightharpoonup_i r)=\left( h_{(1)}k \rightharpoonup_i r \right)\left( h_{(2)} \rightharpoonup_i 1_i \right)
		\]
		(ii)
		For any $m\in {}_i M_j$, we have that
		\[\pi_{ij}(1_H)(m)=\iota\m_{ij}\big(1_H\rhd \iota_{ij}(m)\big)=\iota\m_{ij}\big(\iota_{ij}(m)\big)=m,\]
		and hence (PR1) holds. Also, given $h,k\in H$, we have that
		\begin{align*}
			\pi_{ij}(h)\pi_{ij}(k_{(1)})\pi_{ij}(S(k_{(2)}))(m)&=\iota\m_{ij}\big(h\rhd\big(k_{(1)}\rhd(S(k_{(2)})\rhd\iota_{ij}(m))\big)\big)\\
			&=\iota\m_{ij}\big(h\rhd\big((k_{(1)}\rhd 1_R)(k_{(2)}S(k_{(3)})\rhd\iota_{ij}(m))\big) \big)\, (\text{by \footnotesize{(LPA3)}})\\
			&=\iota\m_{ij}\big(h\rhd\big((k\rhd 1_R)\iota_{ij}(m)\big)\big),
		\end{align*}
		and, on the other hand,
		\begin{align*}
			\pi_{ij}(hk_{(1)})\pi_{ij}(S(k_{(2)}))(m)&=\iota\m_{ij}\big(hk_{(1)}\rhd(S(k_{(2)})\rhd\iota_{ij}(m))\big)\\
			&=\iota\m_{ij}\big((h_{(1)}k_{(1)}\rhd 1_R)(h_{(2)}k_{(2)}S(k_{(3)})\rhd\iota_{ij}(m))\big)\quad\,(\text{by \footnotesize{(LPA3)}})\\
			&=\iota\m_{ij}\big((h_{(1)}k\rhd 1_R)(h_{(2)}\rhd\iota_{ij}(m))\big)\\
			&=\iota\m_{ij}\big((h_{(1)}\rhd (k\rhd 1_R))(h_{(2)}\rhd\iota_{ij}(m))\big)\\
			&=\iota\m_{ij}\big(h\rhd \big( (k\rhd 1_R)\iota_{ij}(m)\big)\big).
		\end{align*}
		Thus, (PR2) is true. Similarly, we can verify that (PR3) also holds, and consequently (ii) is proved. Item (iii) follows from (ii) and Example \ref{rrep1}.    
	\end{proof}

	\begin{theorem}\label{teo-pa-gma}
		The following assertions are equivalent:
		\begin{enumerate} [\rm (i)]
			\item there exists a  left partial action $\rhd: H\otimes R\to R$ of $H$ on $R$ such that $\mdl{\Tilde{M}}{i}{j}$ is invariant, for all $i,j\in \bI_n$,\vspace{.1cm}
			\item  the following conditions hold:\vspace{.1cm}
			\begin{enumerate} [\rm (a)]
				\item for each $i\in \bI_n$, there is a left partial action $\rightharpoonup_{i}: H\otimes R_i\to R_i$ of $H$ on $R_i$, \vspace{.1cm}
				\item $\mdl{M}{i}{j}$ is a left $\underline{R_i\# H}$-module and a right $\underline{\op{H}\# R_j}$-module such that 
				\begin{align*}
					& (1_i\#h)\cdot m=m\cdot (h\# 1_j),& &\,\,(1_i\#h)\cdot r= h\rightharpoonup_{i} r,&\\[.2em]
					&\,\,\,(r_i\# 1_H)\cdot m=r_i\cdot m,& &m\cdot (1_H\#r_j)=m\cdot r_j,&
				\end{align*}
				for all $h\in H$, $m\in\,\! \mdl{M}{i}{j}$, $r,r_i\in R_i$, $r_j\in R_j$ and $i,j\in \bI_n$,\vspace{.2cm}
				\item $(1_i\#h)\cdot mn=\big((1_i\#h_{(1)})\cdot m\big)\big((1_j\#h_{(2)})\cdot n\big),\,\,\, \text{for all }\,h\in H,\,m\in\,\!\! \mdl{M}{i}{j},\,n\in\,\!\! \mdl{M}{j}{k}$.
			\end{enumerate}
		\end{enumerate}
	\end{theorem}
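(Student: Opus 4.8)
The plan is to prove the equivalence $(i)\Leftrightarrow(ii)$ by passing through the smash-product descriptions of partial representations. Throughout we use the identification $R=\bigoplus_{i,j}\mdl{\tilde M}{i}{j}$ and the inclusions $\iota_{ij}$, together with the bimodule structure of Remark \ref{bimodule-structure}.

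\smallskip

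\emph{The direction $(i)\Rightarrow(ii)$.} Assume $\rhd\colon H\otimes R\to R$ is a left partial action with every $\mdl{\tilde M}{i}{j}$ invariant. Part (a) is exactly Lemma \ref{lem-aux-main}(i). For (b) and (c), Lemma \ref{lem-aux-main}(ii) gives a partial representation $\pi_{ij}\colon H\to\End{\mdl{M}{i}{j}}$, and multiplication by elements of $R_i$ on the left (resp. $R_j$ on the right) defines an algebra morphism $\psi_i\colon R_i\to\End{\mdl{M}{i}{j}}$ (resp. an algebra morphism $\varphi_j\colon R_j\to\End{\mdl{M}{i}{j}}^{\rm op}$, via the right module structure, using Remark \ref{bimodule-structure}). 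One checks that $(\psi_i,\pi_{ij})$ is a left covariant pair for the partial action $\rightharpoonup_i$ on $R_i$: identity (CP1) is precisely the statement $\psi_i(h\rightharpoonup_i r)=\pi_{ij}(h_{(1)})\psi_i(r)\pi_{ij}(S(h_{(2)}))$ acting on $m$, which unwinds to $h\rhd\big(\iota_{ii}(r)\iota_{ij}(m)\big)=\big(h_{(1)}\rhd\iota_{ii}(r)\big)\big(h_{(2)}S(h_{(3)})\rhd\iota_{ij}(m)\big)$-type manipulations using (LPA2), (LPA3) and invariance; (CP2) follows similarly from the centrality-type identities \eqref{prodc}, \eqref{prodc-1}. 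By Theorem \ref{luniv} this yields an algebra morphism $\underline{R_i\#H}\to\End{\mdl{M}{i}{j}}$, i.e. a left $\underline{R_i\#H}$-module structure on $\mdl{M}{i}{j}$ satisfying $(1_i\#h)\cdot m=\pi_{ij}(h)(m)$ and $(r_i\#1_H)\cdot m=r_i\cdot m$. Dually, using Lemma \ref{lem-aux-main}(iii) and the opposite covariant pair machinery (Theorem \ref{runiv}) one gets the right $\underline{\op H\#R_j}$-module structure; the compatibility $(1_i\#h)\cdot m=m\cdot(h\#1_j)$ is the assertion that both equal $\iota\m_{ij}(h\rhd\iota_{ij}(m))$, and the two remaining identities of (b) are immediate from the definitions. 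Finally (c) is just a rewriting of (LPA2) applied to the product $\iota_{ij}(m)\iota_{jk}(n)$ inside $R$, using invariance of the subbimodules.

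\smallskip

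\emph{The direction $(ii)\Rightarrow(i)$.} Given the data in (ii), define $\rhd\colon H\otimes R\to R$ by decreeing, for $m\in\mdl{M}{i}{j}$,
\[
h\rhd\iota_{ij}(m)=\iota_{ij}\big((1_i\#h)\cdot m\big),
\]
and extending linearly over the direct sum $R=\bigoplus_{i,j}\mdl{\tilde M}{i}{j}$. By construction each $\mdl{\tilde M}{i}{j}$ is invariant. Axiom (LPA1) follows because $1_i\#1_H$ acts as the identity of $\underline{R_i\#H}$, hence $1_H\rhd\iota_{ij}(m)=\iota_{ij}(m)$. For (LPA2) we must check $h\rhd(xy)=(h_{(1)}\rhd x)(h_{(2)}\rhd y)$ for all $x,y\in R$; since both sides are bilinear, it suffices to take $x=\iota_{ij}(m)$, $y=\iota_{kl}(n)$, and the product vanishes unless $j=k$, in which case the identity is exactly condition (c) together with the module-compatibilities in (b) (one needs to verify both the ``off-diagonal times off-diagonal'' case from (c) and the mixed cases where one factor is diagonal, which reduce to the second and third equalities of (b) and to (a)). For (LPA3), $h\rhd(k\rhd x)$ with $x=\iota_{ij}(m)$ becomes $\iota_{ij}\big((1_i\#h)(1_i\#k)\cdot m\big)=\iota_{ij}\big((1_i\#h_{(1)})(h_{(2)}k\#1_H)(1_i\#1_H)\cdot m\big)$ using the multiplication rule of $\underline{R_i\#H}$ together with (PR3)-type identities for the partial representation $h\mapsto 1_i\#h$; rewriting $(h_{(2)}k\#1_H)\cdot m$ via the recipe $r_i\#1_H$ acts by $r_i\cdot$ only when $h_{(2)}k\rhd 1_R$ is diagonal --- here one invokes that $(1_i\#h_{(1)})\cdot 1_i=h_{(1)}\rightharpoonup_i 1_i$ by (b) and (a) --- one arrives at the required $\big(h_{(1)}\rhd 1_R\big)\big(h_{(2)}k\rhd\iota_{ij}(m)\big)$, and symmetrically for the other equality in (LPA3).

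\smallskip

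\emph{Main obstacle.} The genuinely delicate point is the verification of (LPA2) and (LPA3) in the direction $(ii)\Rightarrow(i)$ for the \emph{mixed} products --- those where exactly one of the two matrix entries being multiplied lies on the diagonal (so one factor is an element of some $R_i$ and the other of $\mdl{M}{i}{j}$ or $\mdl{M}{j}{i}$). These are not literally covered by condition (c) and must be deduced by combining the four compatibility equations in (b), the module axioms for $\underline{R_i\#H}$ and $\underline{\op H\#R_j}$, and the fact that $\pi_{ij}$ restricted appropriately matches $\rightharpoonup_i$ and $\rightharpoonup_j$; establishing that all these pieces fit together coherently across all index patterns $i,j,k$ is the crux of the argument. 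I also anticipate that in the $(i)\Rightarrow(ii)$ direction, checking (CP2) (and its opposite-sided analogue) carefully, rather than (CP1), will require the auxiliary identities \eqref{prodc}--\eqref{prodc-1} in an essential way.
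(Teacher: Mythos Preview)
Your approach is essentially the paper's: Lemma \ref{lem-aux-main} plus covariant/opposite-covariant pairs and Theorems \ref{luniv}, \ref{runiv} for $(i)\Rightarrow(ii)$, and the entrywise definition $h\rhd(\mdl{m}{i}{j})=((1_i\#h)\cdot \mdl{m}{i}{j})$ for $(ii)\Rightarrow(i)$.

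Two corrections, however. First, the ``main obstacle'' you flag is illusory: condition (c) is stated for \emph{all} $i,j,k\in\bI_n$, so the mixed diagonal/off-diagonal products (e.g.\ $i=j$ with $m\in R_i$, $n\in\mdl{M}{i}{k}$) are already instances of (c); the paper accordingly dispatches (LPA2) in one line, ``(LPA2) follows directly from (c)''. Second, your sketch of (LPA3) contains a slip: the expression $(h_{(2)}k\#1_H)$ is not an element of $\underline{R_i\#H}$, since $h_{(2)}k\in H$ and not $R_i$. The actual computation uses $(1_i\#h)(1_i\#k)=(h_{(1)}\rightharpoonup_i 1_i)\#h_{(2)}k$ directly to obtain the first half of (LPA3), and for the second half one must pass to the \emph{right} $\underline{\op H\#R_j}$-module structure via the compatibility $(1_i\#h)\cdot m=m\cdot(h\#1_j)$ from (b), compute $(h_{(1)}k\#1_j)(h_{(2)}\#1_j)$ in $\underline{\op H\#R_j}$, and translate back. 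This use of both module structures in tandem is the genuine content of the (LPA3) verification, and your sketch does not yet reflect it.
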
  \vspace{.05cm}
	
	\begin{proof} 
		(i) $\Rightarrow$ (ii) Let $i\in\bI_n$. Since $\mdl{\Tilde{M}}{i}{i}$ is invariant by the partial action of $H$ on $R$, it follows from  Lemma \ref{lem-aux-main} (i) that the restriction $\rightharpoonup_{i}:H\otimes R_i\to R_i$ is a partial action of $H$ on $R_i$ and hence (a) is proved.
		To prove (b), we recall that $\mdl{M}{i}{j}$ an $(R_i,R_j)$-bimodule, for all $i,j\in \bI_n$. In particular, $\mdl{M}{i}{j}$ is a left $R_i$-module and consequently there is an algebra morphism $\psi_{ij}:R_i\to \End{\mdl{M}{i}{j}}$, given by
		\[
		\psi_{ij} (r)(m) =\iota^{-1}_{ij}\big( \iota_{ii}(r)\iota_{ij}(m) \big)=\iota^{-1}_{ij}\big( \iota_{ij}(r\cdot m)\big)=r\cdot m, \quad \text{for all } r\in R_i,\,\, m\in {}_i M_j .
		\]
		Moreover, by Lemma \ref{lem-aux-main} (ii), $\pi_{ij}:H\to \End{\mdl{M}{i}{j}}$ given by
		\[\pi_{ij}(h)(m)=\iota\m_{ij}\big(h\rhd \iota_{ij}(m)\big),\qquad h\in H,\,\,\,m\in\,\!\mdl{M}{i}{j}, \]
		is a partial representation. One needs to verify that $(\psi_{ij},\pi_{ij})$ is a covariant pair. In order to prove (CP1), take $r\in R_i$, $h\in H$ and $m\in {}_i M_j$. Then
		\begin{align*}
			\psi_{ij} (h\rightharpoonup_i r)(m) & =  \iota^{-1}_{ij}\left( \iota_{ii}(h\rightharpoonup_i r)\iota_{ij} (m) \right) \\
			& =  \iota^{-1}_{ij}\left( \iota_{ii} \iota^{-1}_{ii} (h\triangleright \iota_{ii}(r)) \iota_{ij} (m)\right) \\
			& =  \iota^{-1}_{ij}\left(  (h\triangleright \iota_{ii}(r)) \iota_{ij} (m)\right) \\
			& =  \iota^{-1}_{ij}\left(  (h_{(1)}\triangleright \iota_{ii}(r)) (h_{(2)}S(h_{(3)})\triangleright \iota_{ij} (m))\right) \\
			& =  \iota^{-1}_{ij}\left(  (h_{(1)}\triangleright \iota_{ii}(r)) (h_{(2)}\triangleright (S(h_{(3)})\triangleright \iota_{ij} (m)))\right) \\
			& =  \iota^{-1}_{ij}\left(  h_{(1)}\triangleright \left( \iota_{ii}(r)  (S(h_{(2)})\triangleright \iota_{ij} (m))\right)\right) \\
			& =  \iota^{-1}_{ij}\left(  h_{(1)}\triangleright \left( \iota_{ii}(r)  \iota_{ij}\iota^{-1}_{ij}(S(h_{(2)})\triangleright \iota_{ij} (m))\right)\right) \\
			& =  \iota^{-1}_{ij}\left(  h_{(1)}\triangleright \left( \iota_{ii}(r)  \iota_{ij}(\pi_{ij}(S(h_{(2)}))(m))\right)\right) \\
			& =  \iota^{-1}_{ij}\left(  h_{(1)}\triangleright \iota_{ij}\iota^{-1}_{ij}\left( \iota_{ii}(r)  \iota_{ij}(\pi_{ij}(S(h_{(2)}))(m))\right)\right) \\
			& =  \iota^{-1}_{ij}\left(  h_{(1)}\triangleright \iota_{ij}\left( \psi_{ij}(r)  (\pi_{ij}(S(h_{(2)}))(m))\right)\right) \\
			& =  \pi_{ij}
			(h_{(1)})\left( \psi_{ij}(r)  (\pi_{ij}(S(h_{(2)}))(m))\right) \\
			& =  \pi_{ij}
			(h_{(1)})\circ \psi_{ij}(r)  \circ \pi_{ij}(S(h_{(2)}))(m) .
		\end{align*}
		For (CP2), on the one hand we have
		\begin{eqnarray*}
			\psi_{ij} (r) \circ \pi_{ij} (S(h_{(1)})) \circ \pi_{ij} (h_{(2)}) (m) & = & \psi_{ij} (r) \circ \pi_{ij} (S(h_{(1)})) \left( \iota^{-1}_{ij} ( h_{(2)}\triangleright \iota_{ij} (m))\right) \\
			& = & \psi_{ij} (r) \left( \iota^{-1}_{ij} \left( S(h_{(1)})\triangleright   ( h_{(2)}\triangleright \iota_{ij} (m))\right) \right)\\
			& = & \psi_{ij} (r) \left( \iota^{-1}_{ij} \left(\left( S(h_{(2)})h_{(3)}\triangleright \iota_{ij} (m)\right)\left( S(h_{(1)}) \triangleright 1_R \right) \right)\right) \\
			& = & \psi_{ij} (r) \left( \iota^{-1}_{ij} \left(  \iota_{ij} (m)\left( S(h) \triangleright 1_R \right) \right)\right)\\
			& = & \iota^{-1}_{ij} \left(\iota_{ii} (r)   \iota_{ij} (m)\left( S(h) \triangleright 1_R \right) \right) .
		\end{eqnarray*}
		On the other hand, 
		\begin{eqnarray*}
			\pi_{ij} (S(h_{(1)})) \circ \pi_{ij} (h_{(2)}) \circ \psi_{ij} (r) (m) 
			& = & \pi_{ij} (S(h_{(1)})) \circ \pi_{ij} (h_{(2)}) \left( \iota^{-1}_{ij} (\iota_{ii}(r) \iota_{ij}(m)) \right) \\
			& = & \pi_{ij} (S(h_{(1)})) \left(  \iota^{-1}_{ij} \left( h_{(2)}\triangleright (\iota_{ii}(r) \iota_{ij}(m)) \right)\right) \\
			& = &  \iota^{-1}_{ij} \left( S(h_{(1)})\triangleright \left( h_{(2)}\triangleright (\iota_{ii}(r) \iota_{ij}(m)) \right)\right) \\
			& = &  \iota^{-1}_{ij} \left(\left( S(h_{(1)})h_{(2)}\triangleright (\iota_{ii}(r) \iota_{ij}(m)) \right) \left( S(h_{(1)})\triangleright 1_R \right)\right) \\
			& = &  \iota^{-1}_{ij} \left( \iota_{ii}(r) \iota_{ij}(m) \left( S(h)\triangleright 1_R \right)\right) .
		\end{eqnarray*}
		Therefore, $(\psi_{ij} , \pi_{ij})$ is a covariant pair. It follows from Theorem \ref{luniv} that
		\[\Phi_{ij}: R_i\#H\to \End{\mdl{M}{i}{j}},\quad \Phi_{ij}(r\#h)=\psi_{ij}(r)\pi_{ij}(h), \,\,r\in R_i,\,\,h\in H,\]
		is an algebra morphism. Therefore, $\mdl{M}{i}{j}$ is a left $R_i\#H$-module. \smallbreak
		
		Now, we will denote by $\cdot: \,\!\mdl{M}{i}{j}\otimes R_j\to \,\!\mdl{M}{i}{j}$ the right action of $R_j$ on $\mdl{M}{i}{j}$. Then, $\varphi_{ij}:R_j\to \End{\mdl{M}{i}{j}}^{\rm op}$ given by $\varphi_{ij}(r)(m)=m\cdot r$, $r\in R_j$ and $m\in\,\! \mdl{M}{i}{j}$, is an algebra morphism.  
		Also, by Lemma \ref{lem-aux-main} (ii), the map
		$\gamma_{ij}:\opcop{H}\to \End{\mdl{M}{i}{j}}^{\rm op}$  given by
		\[\gamma_{ij}(h)(m)=\iota\m_{ij}\big(h\rhd \iota_{ij}(m)\big),\qquad h\in H,\,\,\,m\in\,\!\mdl{M}{i}{j}, \]
		is a partial representation. We claim that $(\varphi_{ij},\gamma_{ij})$ is an opposite covariant pair. 
		Firstly, by Example \ref{ltor}, the linear map $\leftharpoonup_{j}:R_j\otimes  \op H\to R_j,$ given by  
		$$r \leftharpoonup_{j} h:=h\rightharpoonup_{j}r=\iota\m_{jj}(h\rhd \iota_{jj}(r)),\,\,\, \text{for all}\,\, r\in R_j,\,\, h\in \op H,$$ defines a right partial action of $\op H$ in $R_j$.
		For all $r\in R_j$ and $m\in\!\mdl{M}{i}{j}$, we will denote by  $u(m,r):=\gamma_{ij}(S\m(h_{[2]}))\cdot_{\rm op}\varphi_{i,j}(r)\cdot_{\rm op}\gamma_{ij}(h_{[1]})(m)$ the element of $\mdl{M}{i}{j}$  obtained using the opposite product of $\End{\mdl{M}{i}{j}}$. Then
		\begin{align*}
			u(m,r)&=\,\,\gamma_{ij}(h_{[1]})\varphi_{i,j}(r)\gamma_{ij}(S\m(h_{[2]}))(m)\\
			&=\,\,\iota\m_{ij}\Big(h_{(2)}\rhd \iota_{ij}\big(\big(\iota\m_{ij}  \big(S\m(h_{(1)})\rhd \iota_{ij}(m)\big)\cdot r\big) \big)\Big) \\
			&=\,\,\iota\m_{ij}\big(h_{(2)}\rhd \big(S\m(h_{(1)})\rhd \iota_{ij}(m)\big)\cdot r \big)\\
			&=\,\,\iota\m_{ij}\big(h_{(2)}\rhd \big(S\m(h_{(1)})\rhd \iota_{ij}(m)\big)\iota_{jj}(r) \big)\\
			&=\,\,\iota\m_{ij}\Big(\big(h_{(2)}\rhd \big( S\m(h_{(1)})\rhd \iota_{ij}(m)\big)(h_{(3)}\rhd \iota_{jj}(r) \big)\Big)\\ 
			&\overset{\mathclap{\eqref{prodc}}}{=}\,\,\iota\m_{ij}\big(\big(h_{(2)} S\m(h_{(1)})\rhd \iota_{ij}(m)\big)(h_{(3)}\rhd \iota_{jj}(r) \big)\big)\\
			&=\,\,\iota\m_{ij}\big(\iota_{ij}(m)\big(h\rhd \iota_{jj}(r) \big)\big)\\
			&\stackrel{(*)}=\,\,m\cdot \iota\m_{jj}(h\rhd \iota_{jj}(r) \big)\big)\\
			&=\,\, \varphi_{ij}( r\leftharpoonup_{j} h)(m),\end{align*}
		where $(*)$ holds because there exists a unique $r'\in R_j$ such that $h\rhd \iota_{jj}(r)=\iota_{jj}(r')$ and consequently
		\[\iota\m_{ij}\big(\iota_{ij}(m)\big(h\rhd \iota_{jj}(r) \big)\big)=\iota\m_{ij}\big(\iota_{ij}(m)\iota_{jj}(r')\big)=\iota\m_{ij}\big(\iota_{ij}(m\cdot r')\big)=m\cdot r'.\]
		Thus, {\footnotesize{(OCP1)}} is true. In order to prove {\footnotesize{(OCP2)}} , given $r\in R_j$ and $m\in\!\mdl{M}{i}{j}$ we will denote  $v(m,r)=\varphi_{ij}(r)\cdot_{\rm op}\gamma_{ij}(\copr{h}{[2]})\cdot_{\rm op}\gamma_{ij}(S\m(h_{[1]})(m)\in\!\mdl{M}{i}{j}$. Notice that
		\begin{align*}
			v(m,r) &=\,\,\gamma_{ij}(S\m(h_{[1]}))\gamma_{ij}(\copr{h}{[2]})\varphi_{ij}(r)(m)
			\\&=\,\,\iota\m_{ij}\big(S\m(h_{(2)})\rhd (h_{(1)}\rhd\iota_{ij}(m\cdot r))\big)
			\\&=\,\,\iota\m_{ij}\big(S\m(h_{(2)})\rhd (h_{(1)}\rhd\iota_{ij}(m)\iota_{jj}(r))\big)
			\\&=\,\,\iota\m_{ij}\Big(S\m(h_{(3)})\rhd \big((h_{(1)}\rhd\iota_{ij}(m))(h_{(2)}\rhd\iota_{jj}(r))\big)\Big)
			\\&=\,\,\iota\m_{ij}\Big(\big(S\m(h_{(4)})\rhd (h_{(1)}\rhd\iota_{ij}(m))\big)\big(S\m(h_{(3)})\rhd (h_{(2)}\rhd\iota_{jj}(r))\big)\Big)
			\\&\overset{\mathclap{\eqref{prodc-1}}}{=}\,\,\iota\m_{ij}\Big(\big(S\m(h_{(4)})\rhd (h_{(1)}\rhd\iota_{ij}(m))\big)\big(h_{(2)}S\m(h_{(3)})\rhd \iota_{jj}(r)\big)\Big)
			\\&=\,\,\iota\m_{ij}\Big(\big(S\m(h_{(2)})\rhd (h_{(1)}\rhd\iota_{ij}(m))\big)\iota_{jj}(r)\Big)
			\\&=\,\,\iota\m_{ij}\Big(\big(S\m(h_{[1]})\rhd (h_{[2]}\rhd\iota_{ij}(m))\big)\iota_{jj}(r)\Big)
			\\&=\,\,\varphi_{ij}(r) \gamma_{ij}(S\m(h_{[1]}) \gamma_{ij}(\copr{h}{[2]})(m)
			\\&=\,\,\gamma_{ij}(\copr{h}{[2]})\cdot_{\rm op} \gamma_{ij}(S\m(h_{[1]}))\cdot_{\rm op} \varphi_{ij}(r)(m).\end{align*}
		Hence, by Theorem  \ref{runiv}, the map $\Gamma_{ij}:\underline{\op H\#R_j} \to \End{\mdl{M}{i}{j}}^{\rm op}$ given by  $$ \Gamma_{ij}(h\#r)=\gamma_{ij}(h)\cdot_{\rm op}\varphi_{ij}(r) , \quad h\in H,\,\,r\in R_j,$$ 
		is an algebra morphism. Thus, $\mdl{M}{i}{j}$ is a right $\underline{\op H\#R_j}$-module.  Now for  $h\in H$, $m\in\!\mdl{M}{i}{j}$ and $r\in R_i$,  we have
		\begin{align*}
			(1_i\#h)\cdot m&=\Phi_{ij}(1_i\#h)(m)=(\psi_{ij}(1_i)\pi_{ij}(h))(m)\\
			&=\iota\m_{ij}\big(h\rhd \iota_{ij}(m)\big)=\gamma_{ij}(h)(m)\\
			&=(\gamma_{ij}(h)\cdot_{\rm op}\varphi_{ij}(1_j))(m)=\Gamma_{ij}(h\#1_j)(m)\\
			&=m\cdot (h\#1_j),
		\end{align*}
		and 
		$$(1_i\#h)\cdot r =\iota\m_{ii}\big(h\rhd \iota_{ii}(r)\big)=h\rightharpoonup_{i} r.$$
		The others two identities of (b) follow directly from the definitions of $\Phi_{ij}$ and $\Gamma_{ij}$.
		Finally,
		\begin{align*}
			(1_i\#h)\cdot mn&=\iota\m_{ik}\big(h\rhd \iota_{ik}(mn)\big)
			\\&=\iota\m_{ik}\big(h\rhd \iota_{ij}(m)\iota_{jk}(n)\big)
			\\&=\iota\m_{ik}\big((h_{(1)}\rhd\iota_{ij}(m))(h_{(2)}\rhd \iota_{jk}(n))\big)
			\\&=\iota\m_{ij}(h_{(1)}\rhd\iota_{ij}(m))\iota\m_{jk}(h_{(2)}\rhd \iota_{jk}(n))
			\\&=  \big((1_i\#h_{(1)})\cdot m\big)\big((1_j\#h_{(2)})\cdot n\big),        \end{align*}
		for all $h\in H$, $m\in\,\! \mdl{M}{i}{j}$ and $n\in\,\! \mdl{M}{j}{k}$. Hence (c) is true and the implication (i) $\Rightarrow$ (ii) is proved.
		\vspace{.2cm}
		
		\noindent (ii) $\Rightarrow$ (i) Let $h\in H$ and $r=(\mdl{m}{i}{j})_{i,j\in \bI_n}\in R$. Using that $\mdl{M}{i}{j}$ is a left $\underline{R_i\# H}$-module, we will define the linear map $\rhd:H\otimes R\to R$ by
		\[h\rhd r=\big( (1_i\#h)\cdot\,\! \mdl{ m}{i}{j}\big)_{i,j\in \bI_n}.\]
		It is clear that {\footnotesize{(LPA1)}} is true. Also, the condition {\footnotesize{(LPA2)}} follows directly from (c). Now, consider $h,k\in H$ and $r=\big(\mdl{m}{i}{j}\big)_{i,j\in \bI_n}\in\,\!\mdl{M}{i}{j}$. Then
		\begin{align*}
			h\rhd\big(k\rhd r\big)&=h\rhd\Big( \big((1_i\# k)\cdot\,\! \mdl{m}{i}{j}\big)_{i,j\in \bI_n}  \Big)\\
			&=\Big( (1_i\# h)\cdot\big((1_i\# k)\cdot\,\! \mdl{m}{i}{j}\big)\Big)_{i,j\in \bI_n} \\
			&=\Big( (1_i\# h)(1_i\# k)\cdot\,\! \mdl{m}{i}{j}\Big)_{i,j\in \bI_n} \\
			&=\Big( \big( h_{(1)}\rightharpoonup_{i} 1_i\# h_{(2)}k\big)  \cdot\,\! \mdl{m}{i}{j}\Big)_{i,j\in \bI_n} ,
		\end{align*}
		\begin{align*}
			( h_{(1)}\rhd 1_R)\big(h_{(2)}k\rhd r\big)& = \operatorname{diag}((1_1\# h_{(1)})\cdot 1_1,\ldots,(1_n\# h_{(1)})\cdot 1_n)(h_{(2)}k\rhd r)\\
			&\overset{\mathclap{\rm(b)}}{=}\operatorname{diag}(h_{(1)}\rightharpoonup_{1} 1_1,\ldots,h_{(1)}\rightharpoonup_{n} 1_n)(h_{(2)}k\rhd r)\\
			&= \Big(\big((h_{(1)}\rightharpoonup_{i} 1_i)(1_i\#h_{(2)}k)\big)\cdot\,\! \mdl{m}{i}{j}\Big)_{i,j\in \bI_n}\\
			&=\Big( \big( h_{(1)}\rightharpoonup_{i} 1_i\# h_{(2)}k\big)  \cdot\,\! \mdl{m}{i}{j}\Big)_{i,j\in \bI_n}.
		\end{align*}
		Denote by $D=\operatorname{diag}(h_{(2)}\rightharpoonup_{j} 1_1,\ldots,h_{(2)}\rightharpoonup_{j} 1_n)\in R$. Then
		\begin{align*}
			\big( h_{(1)}k\rhd  (\mdl{m}{i}{j}) \big)_{i,j\in \bI_n}(h_{(2)}\rhd 1_R)&\overset{\mathclap{\rm(b)}}{=} \big( (1_i\# h_{(1)}k)\cdot\,\!  \mdl{m}{i}{j}\big)_{i,j\in \bI_n} D\\
			&\overset{\mathclap{\rm(b)}}{=} \big( \mdl{m}{i}{j}\cdot\,\! (h_{(1)}k\#1_j)\big)_{i,j\in \bI_n} D\\
			&=  \big(\mdl{m}{i}{j}\cdot (h_{(1)}k\#  h_{(2)}\rightharpoonup_{j} 1_j)\big)_{i,j\in \bI_n}\\  
			&=  \big(\mdl{m}{i}{j}\cdot (h_{[2]}k\#  h_{[1]}\rightharpoonup_{j} 1_j)\big)_{i,j\in \bI_n}\\  
			&=  \big(\mdl{m}{i}{j}\cdot (k\# 1_j)(h\#1_j)\big)_{i,j\in \bI_n}\\  
			&\stackrel{\rm (b)}=  \big( (1_i\# h)(1_i\#k)\cdot \!\mdl{m}{i}{j}\big)_{i,j\in \bI_n} \\
			&=\big( h_{(1)}\rightharpoonup_{i} 1_i)\# h_{(2)}k\cdot\,\! \mdl{m}{i}{j}\big)_{i,j\in \bI_n},     \end{align*}
		which implies {\footnotesize{(LPA3)}}. Thus (ii) $\Rightarrow$ (i) is proved.   
	\end{proof}
	We provide a couple of remarks related to Theorem \ref{teo-pa-gma}.
	\begin{remark} 
		We notice that conditions (b) and (c) of Theorem \ref{teo-pa-gma} are equivalent to (b) and (c'), where
		\[
		{\rm (c')}\qquad mn\cdot (h\# 1_k) =\big( m\cdot (h_{(1)}\# 1_j ) \big)\big( n\cdot (h_{(2)}\# 1_k ) \big), \quad \text{for all }\, h\in H,\,m\in\,\!\! \mdl{M}{i}{j},\,n\in\,\!\! \mdl{M}{j}{k} .
		\]
		Indeed, assume that (b) and (c) are hold. Then
		\begin{align*}
			mn\cdot (h\# 1_k) &=   (1_i\#h)\cdot mn \\
			&=  \big((1_i\#h_{(1)})\cdot m\big)\big((1_j\#h_{(2)})\cdot n\big) \\
			&=  \big( m\cdot (h_{(1)}\# 1_j ) \big)\big( n\cdot (h_{(2)}\# 1_k ) \big).
		\end{align*}
		Analogously, it can be verified that (b) and (c') imply (b) and (c).
	\end{remark}

	\begin{remark}\label{notbimodule}
		Let $H$ be a Hopf algebra acting partially on the generalized matrix algebra $R=(\mdl {M}{i}{j})_{i,j\in \bI_n}$. By Theorem \ref{teo-pa-gma}, ${}_iM_j$ is a left $\underline{R_i \# H}$-module and a right $\underline{H^{op}\# R_j}$-module. However,  ${}_iM_j$  is not necessarily a  $(\underline{R_i \# H} ,\underline{H^{op}\# R_j})$-bimodule. Indeed, since
		$(1_i \# h) \cdot m =m\cdot (h\# 1_j)$, for all $h\in H$ and $m\in {}\mdl {M}{i}{j}$, then we have
		\[
		((1_i \# h) \cdot m)\cdot (k\# 1_j)= (1_i \# k)\cdot ((1_i \# h) \cdot m)=((k_{(1)} \rightharpoonup_{i} 1_i) \# k_{(2)}h) \cdot m ,
		\]
		while 
		\[
		(1_i \# h) \cdot (m\cdot (k\# 1_j))=(1_i \# h)\cdot ((1_i \# k) \cdot m)=((h_{(1)} \rightharpoonup_{i} 1_i) \# h_{(2)}k) \cdot m.
		\]
	\end{remark}
	
	\begin{example}
		Let $A$ be a finite-dimensional algebra, $\{V_1,\ldots,V_n\}$ be  a representative set of  finite-dimensional non-isomorphic simple left $A$-modules, and let  $P_i$ be the projective cover of $V_i$, for all $i=1,\ldots,n$. Then, there exists a primitive idempotent $e_i\in A$ such that $P_i\simeq Ae_{i}$. Consider the idempotent $e=e_1+\ldots+e_n\in A$ and the basic algebra $A^{\operatorname{b}}=eAe$. It is well-known that $A$ and $A^{\operatorname{b}}$ are Morita equivalent. Moreover, the $6$-tuple $(A,A^{\operatorname{b}},Ae,eA,\mu,\nu)$, where $\mu:Ae\otimes_{A^{\operatorname{b}}} eA\to A$ and $\nu:eA\otimes_A Ae\to A^{\operatorname{b}}$ are the multiplication maps, is a strict Morita context. Thus, we have the generalized matrix algebra 
		$$ R = \left( \begin{matrix} A & Ae\\ eA & A^{\rm b} \end{matrix} \right).$$
		In this case, $\mdl{M}{1}{1}=R_1=A$, $\mdl{M}{1}{2}=Ae$, $\mdl{M}{2}{1}=eA$ and $\mdl{M}{2}{2}=R_2=eAe$. 
		Let $H$ be a Hopf algebra and $\cdot:H\otimes A\to A$ a left partial action of $H$ on $A$ such that $h\cdot e=\varepsilon(h)e$, for all $h\in H$. Then
		\[h\cdot (ea)=(h_{(1)}\cdot e)(h_{(2)}\cdot a)=\varepsilon(h_{(1)})e(h_{(2)}\cdot a)=e(h\cdot a),\]
		and similarly $h\cdot (ae)=(h\cdot a)e$, for all $a\in A$ and $h\in H$. Thus, $h\cdot (eae)=e(h\cdot a)e\in eAe$, for all $a\in A$, and we have a partial action of $H$ on $eAe$ obtained by restriction.
		It is straightforward to check that $A$ and $Ae$ are left $\underline{A\#H}$-modules with actions given respectively by 
		\begin{align}
			a\#h\rightharpoonup b&=a(h\cdot b), &a,b\in A,\,\,h\in H,\\
			a\#h\rightharpoonup be&=a(h\cdot b)e, &a,b\in A,\,\,h\in H.
		\end{align}
		On the other hand, $A$ is a right $\underline{H^{\rm op}\#A}$-module and $Ae$ is a right $\underline{H^{\rm op}\#(eAe)}$-module via the actions given respectively by 
		\begin{align}
			b\leftharpoonup h\# a&=(h\cdot b)a, &a,b\in A,\,\,h\in H^{\rm op},\\
			be\leftharpoonup h\# eae&=(h\cdot b)eae,&a,b\in A,\,\,h\in H^{\rm op}.
		\end{align}
		Observe that 
		\[1_A\#h\rightharpoonup be=(h\cdot b)e=be\leftharpoonup h\# e, \qquad 1_A\#h\rightharpoonup b=h\cdot b=b \leftharpoonup h\# 1_A,\quad b\in A,\,\,h\in H.\]
		Hence, the bimodules $\mdl{M}{1}{1}=A$ and $\mdl{M}{1}{2}=Ae$ satisfy the condition (b) of Theorem \ref{teo-pa-gma}. In a similar way, $eA$ and  $eAe$ are left $\underline{(eAe)\#H}$-modules with structures
		\begin{align}
			eae\#h\rightharpoonup eb&= eae(h\cdot b), &a,b\in A,\,\,h\in H,\\
			eae\#h\rightharpoonup ebe&= eae(h\cdot b)e,&a,b\in A,\,\,h\in H.
		\end{align}
		while $eA$ is a right $\underline{\op{H}\# A}$-module and $eAe$ is a right $\underline{\op{H}\# eAe}$-module with structures
		\begin{align}
			eb\leftharpoonup h\# a&=e(h\cdot b)a,&  a,b\in A,\,\,k\in H,\,\,h\in H^{\rm op},\\
			ebe\leftharpoonup h\# eae&=e(h\cdot b)eae,& a,b\in A,\,\,k\in H,\,\,h\in H^{\rm op}.
		\end{align}
		The bimodules $\mdl{M}{2}{1}=eA$ and $\mdl{M}{2}{2}=eAe$ also satisfy the condition (b) of Theorem \ref{teo-pa-gma}. By a direct computation, we can verify that the condition (c) of  Theorem \ref{teo-pa-gma} is valid. Thus, by Theorem \ref{teo-pa-gma}, the linear map $\rhd:H\otimes R\to R$ given by
		\[h\rhd \left( \begin{matrix}
			a & be \\
			ec & ede 
		\end{matrix}\right)=\left( \begin{matrix}
			h\cdot a & (h\cdot b)e \\
			e(h\cdot c) & e(h\cdot d)e 
		\end{matrix}\right)\]
		is a partial action of $H$ on the Morita ring $R$.
	\end{example}

	%\begin{example}Let $G$ be a group and $H=\Bbbk G$ the group algebra....\end{example}
	
	\section{The group case revisited}\label{pgma} In what follows in this section, $\tG$ is a group and $R=(\mdl{M}{i}{j})_{i,j\in \bI_n}$ is a generalized matrix algebra as in \eqref{def-gmr}. We recall  from \cite{CJ} that having a left unital partial action of $G$ on $R$ is the same  as having a  left partial action of the Hopf algebra $\Bbbk G$ on $R$.  
	In Section 4 of \cite{BP}, the authors give sufficient conditions to define a unital partial action of $G$ on $R$. In what follows we relate these conditions with the conditions (a), (b) and (c) in (ii) of Theorem \ref{teo-pa-gma}.
	For the convenience of the reader, we will recall some definitions and results presented in \cite{BP}.
	\smallbreak
	
	\begin{definition}
		\label{def-symetric} Let $n$ be a positive integer, $R=(\mdl{M}{i}{j})_{i,j\in \bI_n}$ be a generalized matrix algebra, and $\mcI=\{I_j\subset R_j\,:\,I_j \text{ is an ideal of }R_j,\text{ for all }j\in \bI_n\}$. The $(R_i,R_j)$-bimodule $\mdl{M}{i}{j}$ will be called $\mcI$-symmetric if $I_i\cdot\,\!\mdl{M}{i}{j}=\!\mdl{M}{i}{j}\cdot I_j$. When $\mdl{M}{i}{j}$ is $\mcI$-symmetric for all $i,j\in \bI_n$, we say that $R$ is $\mcI$-symmetric.
	\end{definition}
	
	The next Lemma is Corollary 3.8 of \cite{BP}.
	
	\begin{lemma}
		\label{cor:ideals}  Let $n$ be a positive integer,  $R=(\mdl{M}{i}{j})_{i,j\in \bI_n}$ be a generalized matrix algebra and $\mcI=\{I_j\subset R_j\,:\,I_j \text{ is an ideal of }R_j,\text{ for all }j\in \bI_n\}$ be a finite family of ideals. If $R$ is $\mcI$-symmetric then 
		\begin{align*}\label{expl-ideal}
			I = \left( \begin{matrix} I_{11} & I_{12} & \ldots & I_{1n}\\ 
				I_{21} & I_{22} & \ldots &  I_{2n}  \\ 
				\vdots & \vdots & \vdots &\vdots \\ 
				I_{n1} &I_{n2} &\ldots  & I_{nn}\end{matrix} \right), \quad \text{where }\, I_{jk}:=I_j\cdot{}\mdl{M}{j}{k}+{}\mdl{M}{j}{k}\cdot I_k,
		\end{align*} 
		is an ideal of $R$. 
	\end{lemma}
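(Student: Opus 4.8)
The plan is to verify directly that $I$ is a two‑sided ideal of $R$. The additive and $\Bbbk$‑linear structure is immediate: each block $I_{jk}=I_j\cdot{}_jM_k+{}_jM_k\cdot I_k$ is a $\Bbbk$‑subspace of ${}_jM_k$ (being a sum of images of linear maps), and $I_{jj}=I_j\cdot R_j+R_j\cdot I_j=I_j$ since $I_j$ is an ideal of $R_j$; hence $I=\bigoplus_{j,k}I_{jk}$ is a $\Bbbk$‑subspace of $R$. Since the multiplication on $R$ is the row--column matrix product assembled from the maps $\theta_{ijk}$, the statement reduces to the two block‑level inclusions
\[
{}_iM_j\cdot I_{jk}\subseteq I_{ik}\qquad\text{and}\qquad I_{ij}\cdot{}_jM_k\subseteq I_{ik},\qquad i,j,k\in\bI_n,
\]
where juxtaposition denotes the relevant $\theta$‑product; summing over the middle index $j$ then yields $R\cdot I\subseteq I$ and $I\cdot R\subseteq I$.

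For the first inclusion I would expand ${}_iM_j\cdot I_{jk}={}_iM_j\cdot(I_j\cdot{}_jM_k)+{}_iM_j\cdot({}_jM_k\cdot I_k)$ and treat the two summands separately, re‑bracketing freely by the associativity relations of (GMD3). The summand ${}_iM_j\cdot({}_jM_k\cdot I_k)=({}_iM_j\cdot{}_jM_k)\cdot I_k$ lies in ${}_iM_k\cdot I_k\subseteq I_{ik}$. For the summand ${}_iM_j\cdot(I_j\cdot{}_jM_k)=({}_iM_j\cdot I_j)\cdot{}_jM_k$ I would invoke $\mcI$‑symmetry of the bimodule ${}_iM_j$, namely ${}_iM_j\cdot I_j=I_i\cdot{}_iM_j$, to rewrite it as $(I_i\cdot{}_iM_j)\cdot{}_jM_k=I_i\cdot({}_iM_j\cdot{}_jM_k)\subseteq I_i\cdot{}_iM_k\subseteq I_{ik}$. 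This settles $R\cdot I\subseteq I$.

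The second inclusion is handled symmetrically: expand $I_{ij}\cdot{}_jM_k=(I_i\cdot{}_iM_j)\cdot{}_jM_k+({}_iM_j\cdot I_j)\cdot{}_jM_k$; the first term equals $I_i\cdot({}_iM_j\cdot{}_jM_k)\subseteq I_i\cdot{}_iM_k\subseteq I_{ik}$, and for the second term one uses $\mcI$‑symmetry of ${}_jM_k$, namely $I_j\cdot{}_jM_k={}_jM_k\cdot I_k$, to get $({}_iM_j\cdot I_j)\cdot{}_jM_k={}_iM_j\cdot(I_j\cdot{}_jM_k)={}_iM_j\cdot({}_jM_k\cdot I_k)=({}_iM_j\cdot{}_jM_k)\cdot I_k\subseteq{}_iM_k\cdot I_k\subseteq I_{ik}$, giving $I\cdot R\subseteq I$.

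There is no genuine obstacle here; the only point demanding attention is the bookkeeping — one must apply $\mcI$‑symmetry of ${}_iM_j$ (respectively of ${}_jM_k$) precisely when an ideal has to be transported across the left (respectively right) factor of a triple product, and one must keep in mind that every juxtaposition is a \emph{balanced} $\theta$‑product, so that the associativity identities of (GMD3) genuinely license all the re‑bracketings above.
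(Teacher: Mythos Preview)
Your argument is correct: the blockwise inclusions ${}_iM_j\cdot I_{jk}\subseteq I_{ik}$ and $I_{ij}\cdot{}_jM_k\subseteq I_{ik}$ are exactly what is needed, and you apply $\mathcal{I}$-symmetry at precisely the right places (transporting $I_j$ across ${}_iM_j$ for the left inclusion and across ${}_jM_k$ for the right one), with (GMD3) justifying the rebracketings. Note, however, that the paper does not actually prove this lemma --- it is quoted without proof as Corollary~3.8 of \cite{BP} --- so there is no in-paper argument to compare against; your direct verification is the natural one and almost certainly coincides with what the cited reference does.
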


	Assume that, for each $i\in \bI_n$, there is a partial action  $\af^{(i)}=\big(D^{(i)}_g,\af^{(i)}_g\big)_{g\in \tG}$
	of $\tG$ on $R_i$. Consider $\mcI_g:=\big\{D^{(i)}_g\,:\,i\in\bI_n\big\}$ and suppose that $R$ is $\mcI_g$-symmetric, for all $g\in \tG$. By Lemma  \ref{cor:ideals}, the subset $I_g$ of $R$ given by 
	\begin{equation}\label{def-ideal}
		I_{g}=\big(D^{(i)}_g\,\mdl{M}{i}{j}\big)_{i,j\in \bI_n},
	\end{equation} is an ideal of $R$.
	\medbreak
	
	Now we recall the notion of datum for $R$  introduced in Definition 4.2 of \cite{BP}.
	
	\begin{definition}
		Let $i\in \bI_n$ and $g\in \tG$.  Suppose that $\af^{(i)}$ and $\mcI_g$ are as above and that $R$ is $\mcI_g$-symmetric. For each $g\in \tG$, assume that there exists a collection   
		of linear bijections \[\gamma_g=\left\{\gamma^{(ij)}_g:D^{(i)}_{g\m}\,\mdl{M}{i}{j}\to D^{(i)}_g\,\mdl{M}{i}{j}\right\}_{i,j\in \bI_n}.\]  The pair 
		$\mcD=\big(\{\alpha^{(i)}\}_{i\in \bI_n}, \{\gamma_{g}\}_{g\in\tG}\big)$
		is called a {\it datum for $R$} if the following statements hold: for all $g,h\in \tG$, $i,j,k\in \bI_n$, \vspace{.1cm}
		\begin{align}
			\label{cond1} &\gamma^{(ii)}_g=\alpha^{(i)}_g,\quad\quad\gamma^{(ij)}_e=\id_{M_{ij}},
			\\[.2em]	
			\label{cond2} &\gamma^{(ik)}_g(u)\gamma^{(kj)}_g(v)=\gamma^{(ij)}_g(uv),\,\,\,\text{for all } u\in D^{(i)}_{g\m}\,\mdl{M}{i}{k},\,v\in D^{(k)}_{g\m}\,\mdl{M}{k}{j},\\[.2em]
			\label{cond03} &\big(\gamma^{(ij)}_h\big)^{\m}\!\!\left(D^{(i)}_{g\m}\,\mdl{M}{i}{j}\cap D^{(i)}_{h}\,\mdl{M}{i}{j}\right) \subset D^{(i)}_{(gh)\m}\,\mdl{M}{i}{j}, \\[.4em]
			\label{cond4} &\gamma^{(ij)}_g\left(\gamma^{(ij)}_h(a)\right)=\gamma^{(ij)}_{gh}(a),\,\,\,\text{for all }  a\in \big(\gamma^{(ij)}_h\big)^{\m}\!\!\left(D^{(i)}_{g\m}\,\mdl{M}{i}{j}\cap D^{(i)}_{h}\,\mdl{M}{i}{j}\right). 
		\end{align} 
		
	\end{definition}

	It was proved in Theorem 4.3 of \cite{BP} that there is a partial action of $\tG$ on $R$ associated with each datum for $R$.
	\medbreak
	
	\begin{remark}\label{resu}\label{r1} 
		{\rm  Let $\mcD=\big(\{\alpha^{(i)}\}_{i\in \bI_n}, \{\gamma_{g}\}_{g\in\tG}\big)$ be a datum for $R$. By Remark 4.1 of  \cite{BP}, for all $g,h\in \tG$ and $i,j\in \bI_n$, we have that \vspace{.1cm}
			\begin{enumerate}[\rm (i)]
				\item $(\gamma^{(ij)}_g)^{\m}=\gamma^{(ij)}_{g\m}$; \vspace{.2cm}
				\item $\big(\gamma^{(ij)}_h\big)^{\m}\!\!\left(D^{(i)}_{g\m}\,\mdl{M}{i}{j}\cap D^{(i)}_{h}\,\mdl{M}{i}{j}\right)=D^{(i)}_{(gh)\m}\,\mdl{M}{i}{j}\cap D^{(i)}_{h\m}\,\mdl{M}{i}{j}$;\vspace{.2cm}
				%\item $D^{(i)}_{g}\!\,\mdl{M}{i}{j}=1^{(i)}_g\!\,\mdl{M}{i}{j}=\!\,\mdl{M}{i}{j}1^{(j)}_g=\!\,\mdl{M}{i}{j}D^{(j)}_{g}$.
		\end{enumerate}}
	\end{remark}

	\begin{remark} 
		Let $\mcD=\big(\{\alpha^{(i)}\}_{i\in \bI_n}, \{\gamma_{g}\}_{g\in\tG}\big)$ be a datum for $R$. Assume that $\alpha^{(i)}$ is unital, for all $i\in \bI_n$, and denote by $1_g^{(i)}$ the identity element of $D_{g}^{(i)}$, $g\in \tG$.   For each $g\in \tG$, consider the diagonal matrix 
		\begin{align}\label{unity-for-gamma}
			1_g:=\operatorname{diag}\big(1^{(1)}_g,\ldots,1^{(n)}_g\big)=  \left( \begin{matrix} 1^{(1)}_{g} & 0 & \ldots & 0 \\ 
				0 & 1^{(2)}_{g} & \ddots & \vdots  \\ 
				\vdots & \ddots & \ddots & 0 \\ 
				0 &\ldots & 0 & 1^{(n)}_{g} \end{matrix} \right) \in R.
		\end{align} It was proved in Lemma 4.5 of \cite{BP} that  $1_g$ is a central idempotent of $R$ if and only if
		\begin{equation}\label{cent}
			1^{(i)}_gm=m1^{(j)}_g, \quad \text{for all } m\in \!\,\mdl{M}{i}{j}\text{ and } i,j\in \bI_n.
		\end{equation}  
		In this case, by Lemma 4.5 of \cite{BP}, we have that $I_g=R1_g$.
		
	\end{remark}

	\begin{definition}\label{unital-datum}
		A datum $\mcD=\big(\{\alpha^{(i)}\}_{i\in \bI_n}, \{\gamma_{g}\}_{g\in\tG}\big)$ for $R$ is called unital, if the ideal of $R$ given in \eqref{def-ideal} is unital with identity element $1_g$ given by \eqref{unity-for-gamma}, for all $g\in \tG$.
	\end{definition}
	
	In order to relate the conditions of a datum for $R$ with the conditions given in (ii) of Theorem \ref{teo-pa-gma}, we consider the following statement:\vspace{.2cm}
	\begin{enumerate} [$\quad$\rm (a)']
		\item  there is a  left partial action $\rightharpoonup_{i}: \Bbbk\tG\otimes R_i\to R_i$ of $\Bbbk\tG$ on $R_i$ such that $g\rightharpoonup_{i} 1_{R_i}$ satisfies \eqref{cent}, for all $g\in \tG$ and $i\in \bI_n$.
	\end{enumerate}
	%Observe that (a)' is more restrictive than (a) of Theorem \ref{teo-pa-gma} (ii).
	\begin{proposition}\label{htod}
		{\rm  Let $\mcD=\big(\{\alpha^{(i)}\}_{i\in \bI_n}, \{\gamma_{g}\}_{g\in\tG}\big)$ be a unital datum for $R$. Then the Hopf algebra $H=\Bbbk\tG$  satisfies the conditions (a)' and (b)-(c) of Theorem \ref{teo-pa-gma}  (ii).}
	\end{proposition}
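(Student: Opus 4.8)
The plan is to obtain conditions (a)$'$ and (b)--(c) by routing the unital datum through the general correspondence already available, rather than building the module structures by hand: first I would manufacture an honest partial action of the Hopf algebra $H=\Bbbk\tG$ on $R$ out of $\mcD$, and then feed it into Theorem~\ref{teo-pa-gma}. Concretely, I would invoke Theorem 4.3 of \cite{BP}: the datum $\mcD=\big(\{\alpha^{(i)}\}_{i\in\bI_n},\{\gamma_g\}_{g\in\tG}\big)$ gives rise to a partial action $\big(I_g,\beta_g\big)_{g\in\tG}$ of $\tG$ on $R$, where $I_g=\big(D^{(i)}_g\,\mdl{M}{i}{j}\big)_{i,j\in\bI_n}$ is the ideal of \eqref{def-ideal} and $\beta_g$ acts on the $(i,j)$-block through $\gamma^{(ij)}_g$. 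Since $\mcD$ is \emph{unital}, each $I_g$ is a unital two-sided ideal of $R$ with identity $1_g=\operatorname{diag}\big(1^{(1)}_g,\dots,1^{(n)}_g\big)$; the identity element of a unital two-sided ideal is always a central idempotent of the ambient ring, so $1_g$ is central in $R$, which is precisely condition \eqref{cent}, i.e.\ $1^{(i)}_g\,m=m\,1^{(j)}_g$ for all $m\in\mdl{M}{i}{j}$ (this is also the content of Lemma 4.5 of \cite{BP}). As recalled at the start of Section~\ref{pgma} (see also Example~\ref{ex-pa-group}), a left unital partial action of $\tG$ on $R$ is the same thing as a left partial action $\rhd$ of $H=\Bbbk\tG$ on $R$, given by $g\rhd r=\beta_g\big(r\,1_{g\m}\big)$.

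Next I would check that each subbimodule $\mdl{\Tilde{M}}{i}{j}=\iota_{ij}(\mdl{M}{i}{j})$ is $H$-invariant. Writing $r=(m_{ij})\in R$ and using \eqref{cent} to rewrite $r\,1_{g\m}$ entrywise as $\big(1^{(i)}_{g\m}m_{ij}\big)\in I_{g\m}$, one gets $g\rhd(m_{ij})=\big(\gamma^{(ij)}_g(1^{(i)}_{g\m}m_{ij})\big)_{i,j\in\bI_n}$, and $\gamma^{(ij)}_g(1^{(i)}_{g\m}m_{ij})\in D^{(i)}_g\,\mdl{M}{i}{j}\subseteq\mdl{M}{i}{j}$; hence $h\rhd\iota_{ij}(m)\in\mdl{\Tilde{M}}{i}{j}$ for every $h\in H$ and $m\in\mdl{M}{i}{j}$. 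Thus $\rhd$ is a left partial action of $H=\Bbbk\tG$ on $R$ satisfying condition (i) of Theorem~\ref{teo-pa-gma}, and the implication (i)$\Rightarrow$(ii) of that theorem delivers conditions (b) and (c) directly, together with a left partial action $\rightharpoonup_i$ of $\Bbbk\tG$ on each $R_i$. To upgrade (a) to (a)$'$ I would then unwind the definition of $\rightharpoonup_i$ from that proof:
\[
g\rightharpoonup_i r=\iota^{-1}_{ii}\big(g\rhd\iota_{ii}(r)\big)=\gamma^{(ii)}_g\big(1^{(i)}_{g\m}r\big)=\alpha^{(i)}_g\big(r\,1^{(i)}_{g\m}\big)
\]
by \eqref{cond1}, so that $g\rightharpoonup_i 1_{R_i}=1^{(i)}_g$, which satisfies \eqref{cent} by the previous paragraph. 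This yields (a)$'$ and finishes the argument.

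The step I expect to be the main obstacle is the bookkeeping in the passage from the datum to the partial action: one must extract from Theorem 4.3 of \cite{BP} the precise entrywise form of $\beta_g$ — namely that, after truncating by $1_{g\m}$, it acts on the $(i,j)$-block via $\gamma^{(ij)}_g$ — and handle the domains $D^{(i)}_{g\m}\,\mdl{M}{i}{j}$ with care, repeatedly using \eqref{cent} to move the truncating idempotents between the left ($R_i$-) and right ($R_j$-) actions. Once the formula $g\rhd(m_{ij})=\big(\gamma^{(ij)}_g(1^{(i)}_{g\m}m_{ij})\big)_{i,j}$ is in hand, $H$-invariance of the $\mdl{\Tilde{M}}{i}{j}$ and the identification of $\rightharpoonup_i$ with the $\Bbbk\tG$-version of $\alpha^{(i)}$ are routine, and the substantive conditions (b) and (c) are supplied wholesale by Theorem~\ref{teo-pa-gma}.
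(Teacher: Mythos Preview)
Your argument is correct, but it follows a genuinely different route from the paper's proof. The paper constructs everything by hand: it writes down the formula $(r\delta_g)\cdot m=r\gamma_g^{(ij)}(1^{(i)}_{g\m}m)$ for the left $\underline{R_i\#\Bbbk\tG}$-module structure on $\mdl{M}{i}{j}$ (and an analogous formula on the right), then verifies the module axioms and the compatibility conditions in (b) and (c) directly from the datum identities \eqref{cond1}--\eqref{cond4} and the centrality condition \eqref{cent}. You instead assemble a single partial action of $\tG$ on $R$ from the datum via Theorem~4.3 of \cite{BP}, promote it to a $\Bbbk\tG$-action using Example~\ref{ex-pa-group}, check $H$-invariance of the blocks, and then let Theorem~\ref{teo-pa-gma} (i)$\Rightarrow$(ii) do all the structural work; (a)$'$ then falls out by computing $g\rightharpoonup_i 1_{R_i}=1^{(i)}_g$. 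Your route is shorter and more conceptual, and it makes transparent that Proposition~\ref{htod} is essentially Theorem~4.3 of \cite{BP} read through the lens of Theorem~\ref{teo-pa-gma}; the cost is that it leans on the explicit entrywise description of the partial action produced in \cite{BP}, which the present paper only cites without stating. The paper's direct approach, by contrast, is self-contained and yields the explicit module formulas up front, at the price of repeating computations that Theorem~\ref{teo-pa-gma} already encodes.
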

	\begin{proof}
		
		\noindent Let $i\in \bI_n$. Consider the linear map $\rightharpoonup_{i}\,: \Bbbk\tG\otimes R_i\to R_i$ given by
		\[g\rightharpoonup_{i}x=\gamma_g^{(ii)}(x1^{(i)}_{g\m}), \quad g\in \tG,\,\,x\in R_i.\] Since $\gamma_g^{(ii)}=\alpha_g^{(i)}$, it follows that $\rightharpoonup_{i}$ is a  left partial action of $\Bbbk \tG$ on $R_i$ and $g\rightharpoonup_{i} 1_{R_i}=\alpha_g^{(i)}(1^{(i)}_{g\m})=1_g^{(i)}$. Hence (a)' is proved.
		To prove (b) of Theorem \ref{teo-pa-gma}, we recall from \eqref{lsm}  that  $\underline{}R_i\# \Bbbk\tG\simeq R_i\rtimes_{\alpha_i} \tG$  as algebras. We define the  linear map $\cdot:(R_i\rtimes_{\alpha_i} \tG)\otimes\,\! \mdl{M}{i}{j}\to\!\, \mdl{M}{i}{j}$  by
		\[(r\delta_g)\cdot m=r\gamma_g^{(ij)}(1^{(i)}_{g\m}m)=r\gamma_g^{(ij)}(m1^{(j)}_{g\m}), \quad r\in D_g^{(i)}, \,g\in \tG, \,m\in\!\, \mdl{M}{i}{j}.\]
		We claim that this linear map defines a left $R_i\rtimes_{\alpha_i} \tG$-module structure on $\mdl{M}{i}{j}$. Using that $\gamma^{(ij)}_e=\id_{M_{ij}}$ it is immediate that $(1_i\delta_e)\cdot m=m$, for all $m\in\!\, \mdl{M}{i}{j}$. Moreover, for all $g,h\in \tG$, 
		$r\in D_g^{(i)}$ and $s\in D_h^{(i)}$ we have
		\begin{align*}
			\big((r\delta_g)(s\delta_h)\big)\cdot m&=\,r\alpha^{(i)}_g(s1^{(i)}_{g\m})\delta_{gh}\cdot m\\
			&=\,r\alpha^{(i)}_g(s1^{(i)}_{g\m})\gamma_{gh}^{(ij)}(m1^{(j)}_{(gh)\m})\\
			&=\,r\alpha^{(i)}_g(s1^{(i)}_{g\m})1_g^{(i)}\gamma_{gh}^{(ij)}(m1^{(j)}_{(gh)\m})\\
			&\overset{\mathclap{\eqref{cent}}}{=}\,r\alpha^{(i)}_g(s1^{(i)}_{g\m})\gamma_{gh}^{(ij)}(m1^{(j)}_{(gh)\m})1_g^{(j)} \\
			&=\,r\alpha^{(i)}_g(s1^{(i)}_{g\m})\gamma_{gh}^{(ij)}(m1^{(j)}_{(gh)\m})1_g^{(j)}1_{gh}^{(j)} \\
			&\overset{\mathclap{\eqref{cond1}}}{=}\, r\alpha^{(i)}_g(s1^{(i)}_{g\m})\gamma_{gh}^{(ij)}(m1^{(j)}_{(gh)\m})\gamma_{gh}^{(jj)}(  1_{h\m}^{(j)}1_{(gh)\m}^{(j)})\\
			&\overset{\mathclap{\eqref{cond2}}}{=}\,r\alpha^{(i)}_g(s1^{(i)}_{g\m})\gamma_{gh}^{(ij)}(m1^{(j)}_{(gh)\m}1_{h\m}^{(j)})\\
			&\overset{\mathclap{\eqref{cond4}}}{=}\, r\alpha^{(i)}_g(s1^{(i)}_{g\m})\gamma_{g}^{(ij)}(\gamma_{h}^{(ij)}(m1^{(j)}_{(gh)\m}1_{h\m}^{(j)}))\\
			&=  r\alpha^{(i)}_g(s1^{(i)}_{g\m})\gamma_g^{(ij)}(\gamma_h^{(ij)}(m1^{(j)}_{h\m}  )1^{(j)}_{g\m})\\
			&=\,r\gamma^{(ii)}_g(s1^{(i)}_{g\m})\gamma_g^{(ij)}(\gamma_h^{(ij)}(m1^{(j)}_{h\m}  )1^{(j)}_{g\m}) \\
			&= \, r\gamma_g^{(ij)}(s\gamma_h^{(ij)}(m1^{(j)}_{h\m}  )1^{(j)}_{g\m})   \\
			&=\,(r\delta_g)\cdot((s\delta_h)\cdot m).    
		\end{align*}
		
		To complete the proof of (b), we need to verify that $\mdl{M}{i}{j}$ is a right $\underline{\op{\Bbbk\tG}\# {R_j}}$-module. By \eqref{rsm}, $\underline{\op{\Bbbk\tG}\# {R_j}}\simeq  \op{(\op{R_j}\ltimes_{\alpha_j} \tG)}$ as algebras. Thus, we will prove that $\mdl{M}{i}{j}$ is a left   $(\op{R_j}\rtimes_{\alpha_j} \tG)$-module. For that, define the linear map $\cdot:\op{R_j}\rtimes_{\alpha_j}\! \tG\otimes\,\! \mdl{M}{i}{j}\to\!\, \mdl{M}{i}{j}$  by
		\[(r\delta_g)\cdot m=\gamma_g^{(ij)}(m1^{(j)}_{g\m})r,\quad r\in D_g^{(j)}, \,g\in \tG, \,m\in\!\, \mdl{M}{i}{j}.\]
		Notice that by \ref{cent} this action is well defined. Also, by \eqref{cond1}, we have $(1_{R_j}\delta_e)\cdot m=\gamma_e^{(ij)}(m1^{(j)}_{e})1_{R_j}=\gamma_e^{(ij)}(m1_{R_j})1_{R_j}=m$. Finally, observe that
		\begin{align*}
			\big( (r\delta_g)(s\delta_h)\big)\cdot m&=\alpha^{(j)}_g(s1^{(j)}_{g\m})r\delta_{gh}\cdot m\\
			&=\gamma_{gh}^{(ij)}(m1^{(j)}_{(gh)\m})\alpha^{(j)}_g(s1^{(j)}_{g\m})r\\
			&=\gamma_{gh}^{(ij)}(m1^{(j)}_{(gh)\m}) 1_g^{(j)}1_{gh}^{(j)}\alpha^{(j)}_g(s1^{(j)}_{g\m})r\\
			&=\gamma_g^{(ij)}(\gamma_h^{(ij)}(m1^{(j)}_{h\m}  )1^{(j)}_{g\m})\alpha^{(j)}_g(s1^{(j)}_{g\m})r \\ 
			&=\gamma_g^{(ij)}(\gamma_h^{(ij)}(m1^{(j)}_{h\m}  )s1^{(j)}_{g\m})r, \end{align*}
		and 
		\begin{align*}
			(r\delta_g)\cdot \big( (s\delta_h)\cdot m\big)&=\, (r\delta_g)\cdot   \big(\gamma_h^{(ij)}(m1^{(j)}_{h\m})s\big)\\[.3em]
			&=\,\gamma_g^{(ij)}\Big( \big(\gamma_h^{(ij)}(m1^{(j)}_{h\m})s\big) 1^{(j)}_{g\m}\Big)r\\[.3em]
			&=\,\gamma_g^{(ij)}\big( \gamma_h^{(ij)}(m1^{(j)}_{h\m})s 1^{(j)}_{g\m}\big)r.\\
		\end{align*}
		
		Given $h\in \tG$ and $m\in\!\, \mdl{M}{i}{j}$, from \eqref{lsm} follows that
		$$(1_i\#h)\cdot m=(1_h^{(i)}\delta_h)\cdot m=\gamma_h^{(ij)}(m1^{(j)}_{h\m}),$$
		and  \eqref{rsm} implies
		$$m\cdot (h\# 1_j)=(1^{(j)}_{h}\delta_{h})\cdot m=\gamma_{h}^{(ij)}(m1^{(j)}_{h\m}).$$
		Thus, (b) is proved.\vspace{.1cm}
		
		Now, we will see that (c) holds. In fact, if $n\in\!\, \mdl{M}{j}{k}$ then
		\begin{align*}
			(1_i\#h)\cdot mn&=\,\,\gamma_h^{(ik)}(mn1_{h\m}^{(k)})\\
			&\overset{\mathclap{\eqref{cent}}}{=}\,\,\gamma_h^{(ik)}(m1_{h\m}^{(j)}n1_{h\m}^{(k)})\\
			&\overset{\mathclap{\eqref{cond2}}}{=}\,\,\gamma_h^{(ij)}(m1_{h\m}^{(j)})\gamma_h^{(jk)}(n1_{h\m}^{(k)})\\
			&=\,\,\big((1_i\#h)\cdot m\big)\big((1_j\#h)\cdot n\big),
		\end{align*}
		which finishes the proof.
	\end{proof}
	We proceed with the next.
	\begin{proposition}	\label{ga2} {\rm
			Assume that $\Bbbk\tG$ satisfies the conditions (a)' and (b)-(c) of Theorem \ref{teo-pa-gma} (ii). Then:
			\begin{itemize}
				\item [$\circ$] given $i\in \bI_n$, there exists a unital partial action  $\af^{(i)}=\big(D^{(i)}_g,\af^{(i)}_g\big)_{g\in \tG}$ of $\tG$ on $R_i$,\vspace{.1cm}
				\item [$\circ$]  there exists a family of $\Bbbk$-linear bijections $\gamma_g=\left\{\gamma^{(ij)}_g:D^{(i)}_{g\m}\,\mdl{M}{i}{j}\to D^{(i)}_g\,\mdl{M}{i}{j}\right\}_{i,j\in \bI_n}$, for all $g\in \tG$,
			\end{itemize}\vspace{.1cm}
			such that $\mcD=\big(\{\alpha^{(i)}\}_{i\in \bI_n}, \{\gamma_{g}\}_{g\in\tG}\big)$ is a unital datum for $R$.}
	\end{proposition}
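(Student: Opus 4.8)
The plan is to sidestep a direct construction of the datum from the module structures and instead pass through Theorem \ref{teo-pa-gma}. Since (a)$'$ in particular yields condition (a) of Theorem \ref{teo-pa-gma}(ii) for $H=\Bbbk\tG$, the hypotheses give that (a), (b) and (c) hold, so the implication (ii)$\Rightarrow$(i) of that theorem produces a left partial action $\rhd:\Bbbk\tG\otimes R\to R$ for which every $\mdl{\tilde{M}}{i}{j}$ is invariant. As recalled at the beginning of Section \ref{pgma}, this is the same thing as a unital left partial action of the group $\tG$ on $R$, and by Example \ref{ex-pa-group} it is the partial action $\alpha=(D_g,\alpha_g)_{g\in\tG}$ with $1_g:=g\rhd 1_R$ a central idempotent of $R$, $D_g:=R1_g$, and $\alpha_g(r):=g\rhd r$ for $r\in D_{g\m}$. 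Using the explicit formula $h\rhd(m_{ij})=\big((1_i\# h)\cdot m_{ij}\big)_{i,j}$ from the proof of Theorem \ref{teo-pa-gma} together with the identity $(1_i\#h)\cdot r=h\rightharpoonup_{i}r$ from (b), one reads off $g\rhd 1_R=\operatorname{diag}(1^{(1)}_g,\ldots,1^{(n)}_g)$ with $1^{(i)}_g=g\rightharpoonup_{i}1_{R_i}$; the extra condition \eqref{cent} in (a)$'$ says precisely that these diagonal matrices are central in $R$, so $D_g$ coincides with the ideal $I_g$ of \eqref{def-ideal}, i.e. $D_g=(D^{(i)}_g\,\mdl{M}{i}{j})_{i,j}$ with $D^{(i)}_g=R_i1^{(i)}_g$.

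For the diagonal part of the datum I would apply Example \ref{ex-pa-group} to each $\rightharpoonup_{i}$, obtaining a unital partial action $\alpha^{(i)}=(D^{(i)}_g,\alpha^{(i)}_g)_{g\in\tG}$ of $\tG$ on $R_i$; one checks that $\iota_{ii}\circ\alpha^{(i)}_g\circ\iota^{-1}_{ii}$ is the restriction of $\alpha_g$ to $\mdl{\tilde{M}}{i}{i}$. For the off-diagonal part, I would define, for $g\in\tG$ and $i,j\in\bI_n$,
\[
\gamma^{(ij)}_g:D^{(i)}_{g\m}\,\mdl{M}{i}{j}\to\mdl{M}{i}{j},\qquad \gamma^{(ij)}_g(m)=\iota^{-1}_{ij}\big(g\rhd\iota_{ij}(m)\big),
\]
which is meaningful because $\mdl{\tilde{M}}{i}{j}$ is $\rhd$-invariant. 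This map takes values in $D^{(i)}_g\,\mdl{M}{i}{j}$: for $m\in D^{(i)}_{g\m}\,\mdl{M}{i}{j}$ one has $\iota_{ij}(m)\in D_{g\m}$, hence $g\rhd\iota_{ij}(m)=\alpha_g(\iota_{ij}(m))\in D_g\cap\mdl{\tilde{M}}{i}{j}=\iota_{ij}(D^{(i)}_g\,\mdl{M}{i}{j})$, the last equality again using \eqref{cent}. Since $\alpha_g:D_{g\m}\to D_g$ is a bijection with inverse $\alpha_{g\m}$, $\iota_{ij}$ is injective, and $\mdl{\tilde{M}}{i}{j}$ is invariant under both $g$ and $g\m$, the map $\gamma^{(ij)}_g$ is a bijection onto $D^{(i)}_g\,\mdl{M}{i}{j}$ with inverse $\gamma^{(ij)}_{g\m}$.

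It then remains to verify \eqref{cond1}--\eqref{cond4} and unitality. Condition \eqref{cond1} is immediate: $\gamma^{(ii)}_g=\alpha^{(i)}_g$ by the identification above, and $\gamma^{(ij)}_e=\id$ because $e=1_{\Bbbk\tG}$ acts as the identity by (LPA1). For \eqref{cond2} I would use $\iota_{ik}(u)\iota_{kj}(v)=\iota_{ij}(uv)$ and, since $\Delta(g)=g\otimes g$, the axiom (LPA2) in the form $g\rhd\big(\iota_{ik}(u)\iota_{kj}(v)\big)=\big(g\rhd\iota_{ik}(u)\big)\big(g\rhd\iota_{kj}(v)\big)$; applying $\iota^{-1}_{ij}$, and noting $uv\in D^{(i)}_{g\m}\,\mdl{M}{i}{j}$ because $D_{g\m}$ is an ideal, gives $\gamma^{(ij)}_g(uv)=\gamma^{(ik)}_g(u)\gamma^{(kj)}_g(v)$. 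Conditions \eqref{cond03} and \eqref{cond4} I would transport from the standard axioms of the partial group action $\alpha$ on $R$, namely $\alpha_h(D_{h\m}\cap D_{(gh)\m})=D_h\cap D_{g\m}$ and $\alpha_g\circ\alpha_h=\alpha_{gh}$ on $D_{h\m}\cap D_{(gh)\m}$: one translates a statement about $a\in\mdl{M}{i}{j}$ into one about $\iota_{ij}(a)\in R$, using $\iota_{ij}(D^{(i)}_g\,\mdl{M}{i}{j})\subseteq D_g$ and the invariance of $\mdl{\tilde{M}}{i}{j}$, and then pulls the conclusion back along $\iota^{-1}_{ij}$. Finally $\mcD=\big(\{\alpha^{(i)}\},\{\gamma_g\}\big)$ is a unital datum because, as noted above, the ideal $I_g$ of \eqref{def-ideal} equals $D_g=R1_g$, which is unital with identity $1_g=\operatorname{diag}(1^{(1)}_g,\ldots,1^{(n)}_g)$ as in \eqref{unity-for-gamma}; this is exactly Definition \ref{unital-datum}.

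The step I expect to require the most care is the domain bookkeeping in \eqref{cond03} and \eqref{cond4}: before invoking $\alpha_g\circ\alpha_h=\alpha_{gh}$ one must first check that $\iota_{ij}(a)$ lies in $D_{h\m}\cap D_{(gh)\m}$, which amounts to combining the hypothesis $\gamma^{(ij)}_h(a)\in D^{(i)}_{g\m}\,\mdl{M}{i}{j}\cap D^{(i)}_h\,\mdl{M}{i}{j}$ with the intersection identity $\alpha_h(D_{h\m}\cap D_{(gh)\m})=D_h\cap D_{g\m}$ and with the compatibility of the inclusions $\iota_{ij}$ with the ideals $D_g$. Everything else reduces to the content already packaged in Theorem \ref{teo-pa-gma} and to the elementary properties of unital partial group actions.
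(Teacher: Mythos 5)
Your argument is correct, but it is organized differently from the paper's. The paper never invokes the implication (ii)$\Rightarrow$(i) of Theorem \ref{teo-pa-gma} here: it builds the datum directly from the hypotheses, defining $\gamma^{(ij)}_g(x)=(1^{(i)}_g\delta_g)\cdot x$ via the left $R_i\rtimes_{\alpha_i}\tG$-module structure coming from (b), and then verifies \eqref{cond1}--\eqref{cond4} by explicit crossed-product computations using (b) and (c). You instead first assemble the global partial action $\rhd$ of $\Bbbk\tG$ on $R$ from (ii)$\Rightarrow$(i), convert it into a unital partial group action $\alpha=(D_g,\alpha_g)_{g\in\tG}$ on $R$ via Example \ref{ex-pa-group}, identify $D_g$ with the ideal $I_g$ of \eqref{def-ideal} using \eqref{cent}, and then obtain the datum by restricting $\alpha_g$ to the invariant corners $\mdl{\tilde{M}}{i}{j}$, deducing \eqref{cond03} and \eqref{cond4} from the standard identities $\alpha_{h}\m(D_h\cap D_{g\m})=D_{h\m}\cap D_{(gh)\m}$ and $\alpha_g\circ\alpha_h=\alpha_{gh}$ of unital partial group actions. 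Since $h\rhd(\mdl{m}{i}{j})=\big((1_i\#h)\cdot \mdl{m}{i}{j}\big)_{i,j}$ and $1_i\# g\mapsto 1^{(i)}_g\delta_g$ under \eqref{lsm}, your $\gamma^{(ij)}_g$ coincide with the paper's, so the two proofs produce the same datum; there is no circularity, as Theorem \ref{teo-pa-gma} is established independently. What your route buys is conceptual economy -- the datum axioms become formal consequences of the partial-group-action axioms on $R$ rather than fresh computations -- at the cost of routing through the global reconstruction; the paper's route keeps everything at the level of the module data and yields the explicit crossed-product formulas it reuses later. The only small point worth making explicit in your write-up is that \eqref{cent} gives $D^{(i)}_g\,\mdl{M}{i}{j}=\mdl{M}{i}{j}\,D^{(j)}_g$, i.e.\ $R$ is $\mcI_g$-symmetric, which is needed for $I_g$ in \eqref{def-ideal} to be an ideal and for the datum definition to apply; this is immediate from the centrality you already use, so it is a presentational gap only.
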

	
	\begin{proof}
		From Example \ref{ex-pa-group} we obtain that $\alpha^{(i)}=\big(D^{(i)}_g,\af^{(i)}_g\big)_{g\in \tG}$ is a unital partial action of $\tG$ on $R_i$, where $$D^{(i)}_g=R_i1^{(i)}_g, \qquad 1^{(i)}_g:=g\rightharpoonup_{i} 1_{R_i},\qquad\af^{(i)}_g(y)=g\rightharpoonup_{i} y,\quad y\in D^{(i)}_{g\m}.$$
		Notice that  \eqref{cent} implies that $D^{(i)}_{g}\,\mdl{M}{i}{j}=\,\!\mdl{M}{i}{j}\,D^{(j)}_{g}$, for all $i,j\in \bI_n$ and $g\in\tG$. Hence $R$ is $\mcI_g$-symmetric, where $\mcI_g=\big\{D^{(i)}_g\,:\,i\in\bI_n\big\}$. Also, by (b) in (ii) of Theorem \ref{teo-pa-gma}, for each $(i,j)\in \bI_n\times \bI_n$, we have that $\mdl{M}{i}{j}$ is a left 
		$R_i\rtimes_{\alpha_i} \tG$-module. Using this left action we define the following $\Bbbk$-linear morphisms   
		\begin{equation}\label{newg}  
			\gamma^{(ij)}_g: D^{(i)}_{g\m}\,\mdl{M}{i}{j}\to D^{(i)}_g\,\mdl{M}{i}{j},\quad\gamma_g^{(ij)}(x)= (1^{(i)}_g\delta_g)\cdot x,\quad x\in  D^{(i)}_{g\m}\,\mdl{M}{i}{j}. \end{equation}
		Notice that $\gamma^{(ij)}_g$ is well-defined. In fact, it follows from (b) of Theorem \ref{teo-pa-gma} (ii)  that $r_i\delta_e\cdot m=r_i\cdot m$, for all $r_i\in R_i$ and $m\in\,\! \mdl{M}{i}{j}$. Hence, given $x\in  D^{(i)}_{g\m}\,\mdl{M}{i}{j}$,
		\[(1^{(i)}_g\delta_g)\cdot x= ((1^{(i)}_g\delta_e)\cdot (1^{(i)}_g\delta_g))\cdot x= (1^{(i)}_g\delta_e)\cdot [\big(1^{(i)}_g\delta_g)\cdot x]=  1^{(i)}_g\cdot \big( (1^{(i)}_g\delta_g)\cdot x\big),\]
		which implies that $(1^{(i)}_g\delta_g)\cdot x\in D^{(i)}_g\,\mdl{M}{i}{j}$.
		Also,  $$\gamma^{(ij)}_{g\m}\gamma^{(ij)}_g(x)=1^{(i)}_{g\m}\delta_e\cdot x=1^{(i)}_{g\m}\cdot x=x.$$ 
		Similarly, $\gamma^{(ij)}_g\gamma^{(ij)}_{g\m}=\id_{D^{(i)}_{g}\,\mdl{M}{i}{j}} $ and consequently $\gamma^{(ij)}_{g}$ is a $\Bbbk$-linear isomorphism with inverse $\gamma^{(ij)}_{g\m}$.
		Clearly,  $\gamma_e^{(ij)}={\rm id}_{\mdl{M}{i}{j}}$. Moreover, if $\Phi_{ii}$, $\psi_{ii}$ and $\pi_{ii}$ are the maps defined in the proof of (i) $\Rightarrow$ (ii) of Theorem \ref{teo-pa-gma}, then
		\begin{align*} 
			\gamma^{(ii)}_g(a)=(1_g\delta_g)\cdot a=\Phi_{ii}(1_g\#g)(a)=(\psi_{ii}(1_g)\circ \pi_{ii}(g))(a)
			=\alpha_g(a),
		\end{align*} 
		for all $a\in D^{(i)}_{g\m}$. Hence, \eqref{cond1} is proved. In order to prove \eqref{cond2}, consider $u\in D^{(i)}_{g\m}\,\mdl{M}{i}{j}$ and $v\in D^{(j)}_{g\m}\,\mdl{M}{j}{k}$. We recall that $1_i\#g=1_i(g\rightharpoonup_{i} 1_i)\otimes g=1^{(i)}_g\#g$. Then, it follows from (c) of Theorem \ref{teo-pa-gma} (ii) that 
		\begin{align*}
			\gamma^{(ij)}_g(u)\gamma^{(jk)}_g(v)&=\big((1^{(i)}_g\delta_g)\cdot u\big)\big((1^{(j)}_g\delta_g)\cdot v\big)=\big((1^{(i)}_g\# g)\cdot u\big)\big((1^{(j)}_g\# g)\cdot v\big)\\
			&=\big((1_i\# g)\cdot u\big)\big(1_j\# g)\cdot v\big)=(1_i\# g)\cdot (uv)\\
			&=(1^{(i)}_g\# g)\cdot (uv)=(1^{(i)}_g\delta_g)\cdot (uv)=\gamma^{(ik)}_g(uv).
		\end{align*}
		Now, we  prove  \eqref{cond03}. Consider  $ x\in (\gamma^{(ij)}_h\big)^{\m}\!\!\left(D^{(i)}_{g\m}\,\mdl{M}{i}{j}\cap D^{(i)}_{h}\,\mdl{M}{i}{j}\right)$. Then, observe that $y=\gamma^{(ij)}_{h}(x)\in D^{(i)}_{g\m}\,\mdl{M}{i}{j}\cap D^{(i)}_{h}\,\mdl{M}{i}{j}$. Since $y=1^{(i)}_{g\m}1^{(i)}_hy$, it follows that
		\begin{align*} 
			x&=\gamma^{(ij)}_{h\m}(y)= \gamma^{(ij)}_{h\m}(1^{(i)}_{g\m}1^{(i)}_hy)=\alpha_{h\m}(1^{(i)}_{g\m}1^{(i)}_h)\gamma^{(ij)}_{h\m}(y)=1^{(i)}_{(gh)\m}1^{(i)}_{h\m}\gamma^{(ij)}_{h\m}(y),
		\end{align*} 
		and hence $x\in D^{(i)}_{h\m}\,\mdl{M}{i}{j}\cap D^{(i)}_{(gh)\m}\,\mdl{M}{i}{j}\subseteq D^{(i)}_{(gh)\m}\,\mdl{M}{i}{j}$. Finally, we prove  \eqref{cond4}. Let  $a\in \gamma^{(ij)}_{h^{\m}}( D^{(i)}_{g\m}\,\mdl{M}{i}{j}\cap D^{(i)}_{h}\,\mdl{M}{i}{j})$. Then  
		\begin{align*}
			\gamma^{(ij)}_g\left(\gamma^{(ij)}_h(a)\right)&=(1^{(i)}_g\delta_g)\cdot\big((1^{(i)}_h\delta_h)\cdot a\big)=(1^{(i)}_g1^{(i)}_{gh}\delta_{gh})\cdot a=1^{(i)}_g\gamma^{(ij)}_{gh}(a).\end{align*}
		However, using that $a\in D^{(i)}_{h\m}\,\mdl{M}{i}{j}$, we obtain
		\begin{align*}
			\gamma^{(ij)}_{gh}(a)&= \gamma^{(ij)}_{gh}( 1^{(i)}_{h\m}a)=\gamma^{(ii)}_{gh}(1^{(i)}_{h\m}1^{(i)}_{(gh)\m})\gamma^{(ij)}_{gh}(a) \\ &=\alpha^{(i)}_{gh}(1^{(i)}_{h\m}1^{(i)}_{(gh)\m})\gamma^{(ij)}_{gh}(a)=1^{(i)}_g\gamma^{(ij)}_{gh}(a).     
		\end{align*}
		Thus $\gamma^{(ij)}_g\left(\gamma^{(ij)}_h(a)\right)=\gamma^{(ij)}_{gh}(a)$. Hence $\mcD=\big(\{\alpha^{(i)}\}_{i\in \bI_n}, \{\gamma_{g}\}_{g\in\tG}\big)$ is a unital datum for $R$.
	\end{proof}
	We finish this section with the following result.
	
	\begin{theorem}\label{teo-group-case}{\rm
			The following assertions are equivalent:
			\begin{enumerate}[\rm (i)]
				\item there is a unital datum $\mcD=\big(\{\alpha^{(i)}\}_{i\in \bI_n}, \{\gamma_{g}\}_{g\in\tG}\big)$ for $R$.\vspace{.2cm}
				\item The Hopf algebra $\Bbbk\tG$ satisfies the conditions (a)' and (b)-(c)  of Theorem \ref{teo-pa-gma} (ii).
		\end{enumerate}}
	\end{theorem}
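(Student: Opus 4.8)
The plan is to observe that Theorem~\ref{teo-group-case} is nothing but the conjunction of the two implications already established in this section, so the proof reduces to assembling Propositions~\ref{htod} and~\ref{ga2}. For (i)~$\Rightarrow$~(ii) I would simply invoke Proposition~\ref{htod}: starting from a unital datum $\mcD=\big(\{\alpha^{(i)}\}_{i\in\bI_n},\{\gamma_{g}\}_{g\in\tG}\big)$ for $R$, that proposition exhibits, for each $i\in\bI_n$, the left partial action $\rightharpoonup_{i}\colon\Bbbk\tG\otimes R_i\to R_i$, $g\rightharpoonup_{i}x=\gamma_g^{(ii)}(x1^{(i)}_{g\m})$, checks that $g\rightharpoonup_{i}1_{R_i}=1^{(i)}_g$ satisfies the centrality relation \eqref{cent} (which is precisely (a)'), and then, via the algebra isomorphisms \eqref{lsm} and \eqref{rsm} together with the datum axioms \eqref{cond1}--\eqref{cond4}, equips each $\mdl{M}{i}{j}$ with the left $\underline{R_i\#\Bbbk\tG}$-module and right $\underline{\op{\Bbbk\tG}\# R_j}$-module structures required in (b) and (c).

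For the converse (ii)~$\Rightarrow$~(i) I would invoke Proposition~\ref{ga2}: assuming $\Bbbk\tG$ satisfies (a)' and (b)-(c), Example~\ref{ex-pa-group} converts each $\rightharpoonup_{i}$ into a unital partial action $\alpha^{(i)}=(D^{(i)}_g,\alpha^{(i)}_g)_{g\in\tG}$ with $D^{(i)}_g=R_i1^{(i)}_g$; the centrality hypothesis built into (a)' makes $R$ an $\mcI_g$-symmetric algebra, so the ideal $I_g$ of \eqref{def-ideal} is available; and the left $R_i\rtimes_{\alpha_i}\tG$-module structure on $\mdl{M}{i}{j}$ yields the bijections $\gamma^{(ij)}_g(x)=(1^{(i)}_g\delta_g)\cdot x$, whose collection satisfies \eqref{cond1}--\eqref{cond4} by (b), (c) and the partial-action identities. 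Unitality of the resulting datum is immediate because $1_g$ is the identity of $I_g=R1_g$.

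Putting the two implications together proves the equivalence, so there is no real obstacle at the level of Theorem~\ref{teo-group-case} itself — the content is entirely packaged in Propositions~\ref{htod} and~\ref{ga2}. If instead one reproved those propositions from scratch, the delicate point I would expect to fight with is condition \eqref{cond4}, i.e.\ the \emph{partial associativity} $\gamma^{(ij)}_g\circ\gamma^{(ij)}_h=\gamma^{(ij)}_{gh}$ on the appropriate domain: verifying it requires repeatedly absorbing the idempotents $1^{(i)}_g$ through the bimodule elements by means of \eqref{cent} and the module axioms, and this bookkeeping is the main source of friction.
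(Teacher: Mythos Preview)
Your proposal is correct and matches the paper's proof exactly: the theorem is obtained by combining Propositions~\ref{htod} and~\ref{ga2}, one for each implication. Your additional commentary on what those propositions accomplish and where the bookkeeping lies is accurate but goes beyond what is needed, since the content is already packaged in the two propositions.
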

	\begin{proof}
		It follows directly from Propositions \ref{htod} and \ref{ga2}.
	\end{proof}
	
	\section{Morita equivalent partial Hopf actions}
	
	We start by recalling from \cite{AF} the notion of  Morita equivalence for partial actions of Hopf algebras.
	
	\begin{definition} \label{moritaequivalent} (\cite{AF}, Definition 69)
		Let $H$ be a Hopf algebra and $A$ and $B$ two partial $H$-module algebras with partial actions $\rightharpoonup_{A} $ and $\rightharpoonup_{B} $, respectively. We say that these two partial actions are Morita equivalent if
		\begin{enumerate}[\rm (i)]
			\item there exists a strict Morita context $(A,B, {}_AM_B ,{}_BN_A , \tau, \sigma)$ between $A$ and $B$, \vspace{.1cm}
			\item there exists a partial action of $H$ on the Morita ring $\left( \begin{matrix} A & M\\ N & B \end{matrix} \right)$ such that the restrictions to $\left( \begin{matrix} A & 0\\ 0 & 0 \end{matrix} \right)$ and to $\left( \begin{matrix} 0 & 0\\ 0 & B \end{matrix} \right)$ coincide with $\rightharpoonup_{A} $ and $\rightharpoonup_{B}$, respectively.
		\end{enumerate}
	\end{definition}
	
	The next result characterizes Morita equivalent Hopf partial actions.
	
	\begin{proposition}\label{prop-morita-eq}
		Let $H$ be a Hopf algebra and $A$ and $B$ two partial $H$-module algebras with partial actions $\rightharpoonup_{A} $ and $\rightharpoonup_{B} $, respectively. The following statements are equivalent:
		\begin{enumerate}[\rm (1)]
			\item  $\rightharpoonup_{A} $ and $\rightharpoonup_{B} $ are Morita equivalent,\vspace{.1cm}
			\item  The following assertions hold:\vspace{.1cm}
			\begin{enumerate}[\rm (i)]
				\item there exits a strict Morita context $(A,B,M,N,\mu,\nu)$, \vspace{.1cm}
				\item $M$ is a left $\underline{A\# H}$-module and a right $\underline{\op{H}\# B}$-module such that 
				\begin{align*}
					& (1_A\#h)\cdot m=m\cdot (h\# 1_B),& &\,\,(1_A\#h)\cdot a= h\rightharpoonup_{A} a,&\\[.2em]
					&\,\,\,(a'\# 1_H)\cdot m=a'm,& &m\cdot (1_H\#b)=mb,&
				\end{align*}
				for all $h\in H$, $m\in M$, $a,a'\in A$, $b\in B$, \vspace{.1cm}
				\item $N$ is a left $\underline{B\# H}$-module and a right $\underline{\op{H}\# A}$-module such that 
				\begin{align*}
					& (1_B\#h)\cdot n=n\cdot (h\# 1_A),& &\,\,(1_B\#h)\cdot b= h\rightharpoonup_{B} b,&\\[.2em]
					&\,\,\,(b'\# 1_H)\cdot n=b'n,& &n\cdot (1_H\#a)=na,&
				\end{align*}
				for all $h\in H$, $n\in N$, $b,b'\in B$, $a\in A$.
			\end{enumerate}\vspace{.1cm}
		\end{enumerate}     
		
	\end{proposition}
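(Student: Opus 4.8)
The plan is to recognize the statement as the $n=2$ instance of Theorem~\ref{teo-pa-gma}, under the dictionary $R_1=A$, $R_2=B$, ${}_1M_2=M$, ${}_2M_1=N$, with $R$ the Morita ring $\left( \begin{matrix} A & M \\ N & B \end{matrix} \right)$ of the context: under this dictionary, items (ii) and (iii) of the proposition are precisely the instances $(i,j)=(1,2)$ and $(i,j)=(2,1)$ of condition (b) of Theorem~\ref{teo-pa-gma}(ii), so the whole proof consists in translating between Definition~\ref{moritaequivalent} and the list (a)--(c) of Theorem~\ref{teo-pa-gma}(ii). For $(1)\Rightarrow(2)$ I would fix a strict Morita context and a partial action $\rhd$ of $H$ on $R$ whose restrictions to the diagonal corners are $\rightharpoonup_A$ and $\rightharpoonup_B$. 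The existence of these restrictions means exactly that $\iota_{11}(A)$ and $\iota_{22}(B)$ are $H$-invariant, and then $\iota_{12}(M)$ and $\iota_{21}(N)$ are invariant as well, because $\iota_{12}(M)=\iota_{11}(A)\,R\,\iota_{22}(B)$ and, applying (LPA2) twice,
\[
h\rhd\big(\iota_{11}(1_A)\,x\,\iota_{22}(1_B)\big)=\big(h_{(1)}\rhd\iota_{11}(1_A)\big)\big(h_{(2)}\rhd x\big)\big(h_{(3)}\rhd\iota_{22}(1_B)\big)\in \iota_{11}(A)\,R\,\iota_{22}(B)
\]
for every $x\in R$ (symmetrically for $\iota_{21}(N)$). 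Hence all four corners are $H$-invariant, Theorem~\ref{teo-pa-gma}(i)$\Rightarrow$(ii) applies and yields (a), (b), (c); discarding from (b) the parts not listed in (ii)--(iii) gives (2), and the partial actions produced on $R_1,R_2$ are by construction $\rightharpoonup_A$ and $\rightharpoonup_B$.

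For $(2)\Rightarrow(1)$ I would build the partial action on $R$ by verifying the hypotheses (a)--(c) of Theorem~\ref{teo-pa-gma}(ii) and then invoking the implication (ii)$\Rightarrow$(i). Condition (a) is the given pair $\rightharpoonup_A,\rightharpoonup_B$; condition (b) for $(i,j)\in\{(1,2),(2,1)\}$ is exactly (ii) and (iii), while for $(i,j)\in\{(1,1),(2,2)\}$ it is the standard fact that a partial $H$-module algebra $C$ is canonically a left $\underline{C\#H}$- and a right $\underline{\op{H}\#C}$-module with the required covariance, obtained by applying Theorems~\ref{luniv} and \ref{runiv} to the multiplication maps of $C$. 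Condition (c) must be checked for each $(i,j,k)\in\{1,2\}^3$: the instances with $i=j$ or $j=k$ reduce, by the same smash-product bookkeeping as in the proofs of Lemma~\ref{lem-aux-main} and Theorem~\ref{teo-pa-gma}, to (LPA2) for $\rightharpoonup_A,\rightharpoonup_B$ and to the module axioms already in hand.

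The main obstacle is the two remaining instances of (c), $(i,j,k)=(1,2,1)$ and $(2,1,2)$, which involve the Morita multiplications $\mu\colon M\otimes_B N\to A$ and $\nu\colon N\otimes_A M\to B$ and read
\[
h\rightharpoonup_A\mu(m\otimes n)=\mu\Big(\big((1_A\#h_{(1)})\cdot m\big)\otimes\big((1_B\#h_{(2)})\cdot n\big)\Big)
\]
together with the symmetric identity for $\nu$. This is the only place where the \emph{strictness} of the context is genuinely used: writing $1_A=\sum_k\mu(p_k\otimes q_k)$ and $1_B=\sum_l\nu(u_l\otimes v_l)$ as finite sums and invoking the Morita associativity relations $\mu(m\otimes n)\cdot m'=m\cdot\nu(n\otimes m')$ and $n\cdot\mu(m\otimes n')=\nu(n\otimes m)\cdot n'$, one rewrites $\mu(m\otimes n)$ and the module actions so that the displayed identity follows from the module-action instances of (c) already established together with (LPA2). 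Once (a)--(c) hold, Theorem~\ref{teo-pa-gma} produces a partial action of $H$ on the Morita ring $R$ whose diagonal restrictions are $\rightharpoonup_A$ and $\rightharpoonup_B$, which is exactly Definition~\ref{moritaequivalent}; hence (1) holds.
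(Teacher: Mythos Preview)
Your overall strategy—specialize Theorem~\ref{teo-pa-gma} to the Morita ring $R=\left(\begin{smallmatrix}A&M\\N&B\end{smallmatrix}\right)$—is exactly the paper's one-line argument, and your treatment of $(1)\Rightarrow(2)$ is more careful than the paper's: you actually explain why invariance of the diagonal corners forces invariance of $\iota_{12}(M)$ and $\iota_{21}(N)$, so that Theorem~\ref{teo-pa-gma}(i)$\Rightarrow$(ii) applies. You are also right that in the direction $(2)\Rightarrow(1)$ one must verify condition (c), and that the instances with $i=j$ or $j=k$ follow from the smash-product module axioms together with the compatibilities listed in (ii)--(iii).

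The gap is the remaining step: your claim that strictness of the context yields the instances $(i,j,k)\in\{(1,2,1),(2,1,2)\}$ of (c) is not correct. Take $A=B=\Bbbk$ with the trivial (global) action of $H=\Bbbk(\Z/2\Z)$, and $M=N=\Bbbk$ with $\mu,\nu$ the multiplication of $\Bbbk$; this is a strict context and both smash products reduce to the group algebra $\Bbbk(\Z/2\Z)$. Give $M$ the sign representation and $N$ the trivial one. All four conditions in (2)(ii) and (2)(iii) are then satisfied, and so are the $i=j$ or $j=k$ cases of (c). Nevertheless, for the nontrivial group element $g$ one has
\[
g\rightharpoonup_A\mu(m\otimes n)=mn,\qquad \mu\big((1_A\#g)\cdot m\otimes(1_B\#g)\cdot n\big)=(-m)n=-mn,
\]
so the $(1,2,1)$ instance of (c) fails. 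Writing $1_A=\sum_k\mu(p_k\otimes q_k)$ and invoking Morita associativity cannot repair this: those manipulations only relate the $(1,2,1)$ case to the $(2,1,2)$ case (and vice versa), never to the cases you have already established. The compatibility of $\mu$ and $\nu$ with the $H$-structures on $M$ and $N$ is genuinely extra data; it corresponds exactly to condition (c) of Theorem~\ref{teo-pa-gma} and must be assumed, not derived. In short, your proof sketch and the paper's one-line appeal to Theorem~\ref{teo-pa-gma} both leave the same point unaddressed; the statement of the proposition, as written, is missing the analogue of condition (c) for the Morita maps.
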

	
	\begin{proof}
		It follows from  applying Theorem \ref{teo-pa-gma}  to the Morita ring $ R = \left( \begin{matrix} A & M \\ N & B \end{matrix} \right).$
	\end{proof}

	Now we recall the following notion.  
	
	\begin{definition}(\cite{AF}, Definition 72)
		Let $H$ be a Hopf algebra and $B$ be a right partial $H$-module algebra. A vector space $N$ is a right partial $(B,H)$-module if $N$ is a right $B$-module together with a linear map
		$N\otimes H\to N$, $n\otimes h\mapsto nh$ such that
		\begin{enumerate}[\rm (i)]
			\item $n1_H=n$,\vspace{.1cm}
			\item $((nk)b)h=(n(kh_{(1)}))(b\cdot h_{(2)})$,
		\end{enumerate}
		for all $h,k\in H$, $b\in B$ and $n\in N$.
	\end{definition}

	We conclude this work by highlighting an inconsistency in the statement of Proposition 73 of \cite{AF}. In fact, the authors assume that $H$ partially acts on $B$ from the left. However, since $N$ is a right $(B,H)$-module, $H$ indeed acts partially on $B$ from the right.

	%As we have seen in Theorem \ref{teo-pa-gma}, condition (2) of the definition of Morita equivalent partial actions is equivalent to impose that $M$ is a left $\underline{A\# H}$ and a right $\underline{H^{op}\# B}$ module, while $N$ is a left $\underline{B\# H}$ and a right $\underline{H^{op}\# A}$-module, but they are not bimodules, as emphasized before in Remark \ref{notbimodule}.  %But observe that $M$ is a right $\underline{H^{op}\# A}$-module, if and only if, it is a right  $\underline{A\# H}$-module.++++++++++++++++
	
	%Therefore, we must draw attention to an error made in the reference \cite{AF}. 

	%which was supposed to be a covariance condition equivalent to our module structures over the partial smash products.  But in item (4) of that Definition, the authors impose exactly the bimodule condition, which does not actually work, as explained above. This does not invalidate the Definition \ref{moritaequivalent}, of Morita equivalent partial actions of a Hopf algebra $H$, only the result where the authors describe what should be an equivalent condition for two partial actions to obey, in order to make them Morita equivalent partial actions.

\end{document}